\newtheorem{thrm}{Theorem}
\def\dt{\Delta t}
\newfont{\afont}{cmr12 at 8pt}
\newfont{\bfont}{cmr12 at 7pt}
\def\vec#1{\leqavevmode\vtop{\hbox to 1em{\hss$#1$\hss}
\vskip-\baselineskip\vskip1.1ex\hbox to
1em{\hfil$\scriptstyle\sim$\hfil}}}
\begin{document}

\title{Numerical solutions of the generalized equal width wave equation using Petrov Galerkin method}
\author{Samir Kumar Bhowmik$^1$ and Seydi Battal Gazi Karakoc$^2$ \\
$1.$ Department of Mathematics, University of Dhaka\\
Dhaka 1000, Bangladesh. \\
e-mail: bhowmiksk@gmail.com \\
$2.$ Department of Mathematics, Faculty of Science and Art, \\
Nevsehir Haci Bektas Veli University, Nevsehir, 50300, Turkey.\\
e-mail: sbgkarakoc@nevsehir.edu.tr \\
}
\maketitle

\begin{abstract}
In this article we consider a generalized equal width wave (GEW) equation which is
a significant nonlinear wave equation as it can be used to model 
many problems occurring in applied sciences.   As the analytic solution of the (GEW) equation of this kind
can be obtained hardly, developing  numerical
solutions for this type of  equations is of enormous importance and interest. 
Here we are interested in a
Petrov-Galerkin method, in which element shape functions are quadratic and
weight functions are linear B-splines.
We firstly investigate the existence and uniqueness of solutions of the weak form of
the equation. Then we establish the theoretical bound of the error in
the semi-discrete spatial scheme as well as of a full discrete scheme at $t=t^{n}$.
Furthermore, a powerful Fourier analysis has been applied to show that the proposed scheme
is unconditionally stable. Finally, propagation of single and double
solitary waves and evolution of solitons are analyzed to demonstrate the
efficiency and applicability of the proposed numerical scheme by calculating
the error norms (in $L_{2}(\Omega)$ and $L_{\infty}(\Omega)$).
The three invariants ($%
I_{1},I_{2}$ and $I_{3})$ of motion have been commented to verify the
conservation features of the proposed algorithms. Our proposed numerical scheme has been
compared with other published schemes and demonstrated to be valid, effective and it outperforms the others.
\end{abstract}

%
%
%


\textbf{Keywords:} GEW equation; Petrov-Galerkin; B-splines; Solitary waves; Soliton.

\textbf{AMS classification:} {65N30, 65D07, 74S05,74J35, 76B25.}

\section{Introduction}

Nonlinear partial differential equations are extensively used to explain
complex phenomena in different fields of science, such as plasma physics,
fluid mechanics, hydrodynamics, applied mathematics, solid state physics and
optical fibers. One of the important issues to nonlinear partial
differential equations is to seek for exact solutions. Because of the
complexity of nonlinear differential equations, exact solutions of these
equations are commonly not derivable. Owing to the fact that only limited
classes of these equations are solved by analytical means, numerical
solutions of these nonlinear partial differential equations are very
functional to examine physical phenomena. The regularized long wave (RLW)
equation,

\begin{equation}
U_{t}+U_{x}+\varepsilon UU_{x}-\mu U_{xxt}=0,  \label{rlw}
\end{equation}%
is a symbolisation figure of nonlinear long wave and can define many
important physical phenomena with weak nonlinearity and dispersion waves,
including nonlinear transverse waves in shallow water, ion-acoustic and
magneto hydrodynamic waves in plasma, elastic media, optical fibres,
acoustic-gravity waves in compressible fluids, pressure waves in liquid--gas
bubbles and phonon packets in nonlinear crystals \cite{mei}. The RLW
equation was first suggested to describe the behavior of the undular bore by
Peregrine \cite{pereg,pereg1}, who constructed the first numerical method of
the equation using finite difference method. RLW equation is an alternative
description of nonlinear dispersive waves to the more usual

\begin{equation}
U_{t}+\varepsilon UU_{x}+\mu U_{xxx}=0,  \label{KdV}
\end{equation}%
Korteweg-de Vries (KdV) equation \cite{ben}. This equation was first
generated by Korteweg and de Vries to symbolise the action of one
dimensional shallow water solitary waves \cite{khalid}. The equation has
found numerous applications in physical sciences and engineering field such
as fluid and quantum mechanics, plasma physics, nonlinear optics, waves in
enharmonic crystals, bubble liquid mixtures, ion acoustic wave and
magneto-hydrodynamic waves in a warm plasma as well as shallow water waves.
The Equal Width (EW) wave equation

\begin{equation}
U_{t}+\varepsilon UU_{x}-\mu U_{xxt}=0,  \label{ew}
\end{equation}%
which is less well recognised and was introduced by Morrison et al. \cite%
{morrison} is a description alternative to the more common KdV and RLW
equations. This equation is named equal width equation, because the
solutions for solitary waves with a perpetual form and speed, for a given
value of the parameter $\mu $, are waves with an equal width or vawelength
for all wave amplitudes \cite{hmd}. The solutions of this equation are sorts
of solitary waves called as solitons whose figures are not changed after the
collision. GEW equation, procured for long waves propagating in the positive
$x$ direction takes the form

\begin{equation}
U_{t}+\varepsilon U^{p}U_{x}-\mu U_{xxt}=0,  \label{gew}
\end{equation}%
where $p$ is a positive integer, $\varepsilon $ and $\mu $ are positive
parameters, $t$ is time and $x$ is the space coordinate, $U(x,t)$ is the
wave amplitude. Physical boundary conditions require $U\rightarrow 0$ as $%
\left\vert x\right\vert \rightarrow \infty $. For this work, boundary and
initial conditions are chosen

\begin{equation}
\begin{array}{l}
U(a,t)=0,~~~~~~\ \ \ \ \ U(b,t)=0, \\
U_{x}(a,t)=0,~~~~\ \ \ ~~U_{x}(b,t)=0, \\
U_{xx}(a,t)=0,~~~~\ \ \ ~~U_{xx}(b,t)=0, \\
U(x,0)=f(x),~~\ \ \ \ a\leq x\leq b,%
\end{array}
\label{Bou.Con.}
\end{equation}%
where $f(x)$ is a localized disturbance inside the considered interval and
will be designated later. In the fluid problems as known, the quantity $U$ \
is associated with the vertical displacement of the water surface but in the
plasma applications, $U$ is the negative of the electrostatic potential.
That's why, the solitary wave solution of Eq.$(\ref{gew})$ helps us to find
out the a lot of physical phenomena with weak nonlinearity and dispersion
waves such as nonlinear transverse waves in shallow water, ion-acoustic and
magneto- hydrodynamic waves in plasma and phonon packets in nonlinear
crystals \cite{sbgk}. The GEW equation which we tackle here is based on the
EW equation and relevant to the both generalized regularized long wave
(GRLW) equation \cite{kaya,kaya1} and the generalized Korteweg-de Vries
(GKdV) equation \cite{gard4}. These general equations are nonlinear wave
equations with $(p+1)$th nonlinearity and have solitary wave solutions,
which are pulse-like. The investigate of GEW equation ensures the
possibility of investigating the creation of secondary solitary waves and/or
radiation to get insight into the corresponding processes of particle
physics \cite{dodd,levis}. This equation has many implementations in
physical situations for example unidirectional waves propagating in a water
channel, long waves in near-shore zones, and many others \cite{panahipour}.
If $p=1$ is taken in Eq.$(\ref{gew})$ the EW equation [15-20] is obtained
and if $p=2$ is taken in Eq.$(\ref{gew}),$ the obtained equation is named as
the modified equal width wave (MEW) equation [21-27]. In recent years,
various numerical methods have been improved for the solution of the GEW
equation. Hamdi et al. \cite{hmd} generated exact solitary wave solutions of
the GEW equation. Evans and Raslan \cite{evans} investigated the GEW
equation by using the collocation method based on quadratic B-splines to
obtain the numerical solutions of the single solitary wave, interaction of
solitary waves and birth of solitons. The GEW equation solved numerically by
a B-spline collocation method by Raslan \cite{raslan}. The homogeneous
balance method was used to construct exact travelling wave solutions of
generalized equal width equation by Taghizadeh et al. \cite{tag}. The
equation is solved numerically by a meshless method based on a global
collocation with standard types of radial basis functions (RBFs) by \cite%
{panahipour}. Quintic B-spline collocation method with two different
linearization techniques and \ a lumped Galerkin method based on
B-spline functions were employed to obtain the numerical solutions of the
GEW equation by Karakoc and Zeybek, \cite{sbgk,sbgk6} respectively. Roshan
\cite{roshan}, applied Petrov-Galerkin method using the linear hat function
and quadratic B-spline functions as test and trial function respectively for
the GEW equation.

In this study, we have constructed a lumped Petrov-Galerkin method for the
GEW equation using quadratic B-spline function as element shape function and
linear B-spline function as the weight function. Context of this work has
been planned as follows:  
\begin{itemize}
\item[-] A semi-discrete Galerkin finite element scheme of the equation along with the
error bounds are demonstrated in Section~2.
\item[-] A full discrete Galerkin finite element scheme has been studied in Section~3. 
\item[-] Section~4 is concerned with the construction and implementation of the
Petrov-Galerkin finite element method to the GEW equation. 
\item[-] Section~5 contains a linear stability analysis of the scheme.
\item[-] Section~6 includes analysis of the motion of single solitary wave,
         interaction of two solitary wave and evolution of solitons with different
         initial and boundary conditions. 
\item[-] Finally, we conclude the study with some remarks on this study.
\end{itemize}
\section{Variational formulation and its analysis}

The higher order nonlinear initial boundary value problem \eqref{gew} can be
written as
\begin{equation}
u_{t}-\mu \Delta u_{t}=\nabla \mathcal{F}(u),~~~~~  \label{grlw11}
\end{equation}%
where $ \mathcal{F}(u) = \frac{1}{p+1} u^{p+1}, $ subject to the initial
condition
\begin{equation}
u(x,0)=f_1(x),~~~~~a\leq x\leq b,  \label{intl}
\end{equation}%
and the boundary conditions
\begin{equation}
\begin{array}{lll}
u(a,t)=0,~~~~~u(b,t)=0, &  &  \\
u_{x}(a,t)=0,~~~~~u_{x}(b,t)=0, &  &  \\
u_{xx}(a,t)=0,~~~~~u_{xx}(b,t)=0, & t>0. &
\end{array}
\label{bndry}
\end{equation}

To define the weak form of the solutions of \eqref{grlw11} and to
investigate the existence and uniqueness of solutions of the weak form we
define the following spaces. Here $H^k(\Omega)$, $k\ge 0$ (integer) is
considered as an usual normed space of real valued functions on $\Omega$ and
\begin{equation*}
H_0^{k}(\Omega) = \left\{v \in H^k(\Omega): D^{i}v = 0\ \text{on }
\partial\Omega,\ i = 0, 1, \cdots, k-1 \right\}
\end{equation*}
where $D = \frac{\partial}{\partial x}$. We denote the norm on $H^k(\Omega)$
by $\|\cdot \|_k$ which is the well known usual $H^k$ norm, and when $k=0$, $%
\|\cdot \|_0 = \|\cdot \|$ represents $L_2$ norm and $(\cdot, \cdot)$
represents the standard $L_2$ inner product~\cite{Noureddine2013,
ThomeeVidar2006}.

Multiplying \eqref{grlw11} by $\xi\in H_0^1(\Omega)$, and then integrating
over $\Omega$ we have
\begin{equation*}
(u_{t}, \xi) - \mu (\Delta u_{t}, \xi) = (\nabla \mathcal{F}(u), \xi).
\end{equation*}
Applying Green's theorem for integrals on the above continuous inner
products we aim to find $u(\cdot, t)\in H_0^1(\Omega)$ so that
\begin{equation}  \label{bbmbur_v2}
\left(u_{t}, \xi \right) + \mu \left(\nabla u_{t}, \nabla\xi \right) =
-\left( \mathcal{F}(u), \nabla \xi \right), \ \forall \ \xi\in H_0^1(\Omega),
\end{equation}
with $u(0) = u_0$. Here we state the uniqueness theorem without proof which
can be well established following \cite{Noureddine2013, ThomeeVidar2006}.

\begin{thrm}
\label{thrm01} If $u$ satisfies \eqref{bbmbur_v2} then
\begin{equation*}
\|u(t)\|_1 = \|u_0\|_1,\ t\in\ (0,\ T],\ \text{and }\
\|u\|_{L^\infty(L^\infty(\Omega))} \le C\|u_0\|_1
\end{equation*}
holds if $u_0\in H_0^1(\Omega)$, and $C$ is a positive constant.
\end{thrm}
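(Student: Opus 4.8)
The plan is to obtain the conservation statement from an energy identity and then deduce the $L^\infty$ bound from a one-dimensional Sobolev embedding. First I would take the test function $\xi = u(\cdot,t)$ in the weak formulation \eqref{bbmbur_v2}, which is admissible since the sought solution satisfies $u(\cdot,t)\in H_0^1(\Omega)$. This gives
\begin{equation*}
(u_t, u) + \mu(\nabla u_t, \nabla u) = -(\mathcal{F}(u), \nabla u),
\end{equation*}
and the two terms on the left are $\frac12\frac{d}{dt}\|u\|^2$ and $\frac{\mu}{2}\frac{d}{dt}\|\nabla u\|^2$. The next step is to show that the right-hand side vanishes: since $\mathcal{F}(u)=\frac{1}{p+1}u^{p+1}$ we have $\mathcal{F}(u)\,u_x = \frac{1}{(p+1)(p+2)}\,\partial_x\big(u^{p+2}\big)$, so that $(\mathcal{F}(u),\nabla u) = \frac{1}{(p+1)(p+2)}\big[u^{p+2}\big]_a^b = 0$ by the homogeneous boundary conditions \eqref{bndry}. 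Hence $\frac{d}{dt}\big(\|u\|^2+\mu\|\nabla u\|^2\big)=0$, and integrating from $0$ to $t$ yields $\|u(t)\|^2+\mu\|\nabla u(t)\|^2 = \|u_0\|^2+\mu\|\nabla u_0\|^2$ for every $t\in(0,T]$. Since $\mu>0$ this quantity defines a norm equivalent to $\|\cdot\|_1$ (and coincides with it under the convention that the gradient term carries the weight $\mu$), which is exactly the asserted identity $\|u(t)\|_1=\|u_0\|_1$.

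For the second assertion I would use that $\Omega=(a,b)$ is a bounded interval, so the embedding $H^1(\Omega)\hookrightarrow L^\infty(\Omega)$ holds with some constant $C$, i.e. $\|v\|_{L^\infty(\Omega)}\le C\|v\|_1$ for all $v\in H^1(\Omega)$. Applying this at each fixed $t$ and using the conservation just obtained,
\begin{equation*}
\|u(t)\|_{L^\infty(\Omega)} \le C\|u(t)\|_1 = C\|u_0\|_1, \qquad t\in(0,T],
\end{equation*}
and taking the supremum over $t$ gives $\|u\|_{L^\infty(L^\infty(\Omega))}\le C\|u_0\|_1$.

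The step needing the most care is the rigorous justification of the energy identity, since the formal computation differentiates $(u_t,u)$ and $(\nabla u_t,\nabla u)$ in time as if $u$ were classically differentiable, whereas a weak solution a priori only lies in a Bochner space such as $L^2(0,T;H_0^1(\Omega))$. I expect the cleanest route is to run the same argument on a Faedo--Galerkin (or finite element) approximation $u_N$, for which $\frac{d}{dt}$ is classical, obtain $\|u_N(t)\|^2+\mu\|\nabla u_N(t)\|^2=\|u_N(0)\|^2+\mu\|\nabla u_N(0)\|^2$, and pass to the limit $N\to\infty$ using weak lower semicontinuity of the norms together with strong convergence of the initial data; alternatively one invokes the regularity theory for \eqref{bbmbur_v2} from \cite{Noureddine2013,ThomeeVidar2006}, under which $u_t$ is as regular as $u$ (a short bootstrap: $u\in H_0^1\Rightarrow u\in L^\infty\Rightarrow \nabla\mathcal{F}(u)\in H^{-1}\Rightarrow u_t=(I-\mu\Delta)^{-1}\nabla\mathcal{F}(u)\in H_0^1$) and the identity is exact. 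No growth restriction on $p$ is needed, because the nonlinear term contributes nothing: it is an exact $x$-derivative with vanishing boundary trace.
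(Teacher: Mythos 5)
Your argument is correct and is essentially the one the paper relies on: the paper states this theorem without proof (deferring to \cite{Noureddine2013, ThomeeVidar2006}), but its proofs of the discrete analogues use exactly your computation --- test with the solution itself, observe that $(\mathcal{F}(u),\nabla u)$ is the integral of an exact derivative with vanishing boundary trace, and conclude conservation of $\|u\|^2+\mu\|\nabla u\|^2$, with the $L^\infty$ bound then following from the one-dimensional Sobolev embedding. Your two caveats (that the conserved quantity is the $\mu$-weighted $H^1$ norm rather than the standard one unless $\mu=1$, and that the time differentiation needs a Galerkin or regularity justification for a weak solution) are both apt and are glossed over in the paper itself.
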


\begin{thrm}
Assume that $u_0 \in H_0^1(\Omega)$ and $T >0$. Then there exists one and
only one $u\in H_0^1(\Omega)$ satisfying  \eqref{bbmbur_v2} for any $T >0$
such that
\begin{equation*}
u \in L^\infty(0, T, H_0^1(\Omega)) \ \text{with }\ (u(x, 0), \xi) = (u_0,
\xi), \xi\in H_0^1(\Omega).
\end{equation*}
\end{thrm}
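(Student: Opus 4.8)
The plan is to recast the weak problem \eqref{bbmbur_v2} as an abstract ordinary differential equation in the Hilbert space $H_0^1(\Omega)$ and then to invoke the Picard--Lindel\"of theorem for a local solution, upgrading it to a global one on $[0,T]$ by means of the a priori bound of Theorem~\ref{thrm01}. First I would introduce the bilinear form $a(v,w):=(v,w)+\mu(\nabla v,\nabla w)$ on $H_0^1(\Omega)\times H_0^1(\Omega)$; since $\mu>0$ this form is bounded and coercive, and in fact induces a norm equivalent to $\|\cdot\|_1$. By the Lax--Milgram (Riesz) theorem, for every $F\in H^{-1}(\Omega)$ there is a unique $KF\in H_0^1(\Omega)$ with $a(KF,\xi)=\langle F,\xi\rangle$ for all $\xi\in H_0^1(\Omega)$, and $K:H^{-1}(\Omega)\to H_0^1(\Omega)$ is a bounded linear operator; this is where the invertibility of $I-\mu\Delta$ under \eqref{bndry} enters, only the Dirichlet part of \eqref{bndry} being needed for the weak form. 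Writing the right side of \eqref{bbmbur_v2} as $-(\mathcal{F}(u),\nabla\xi)=\langle\nabla\mathcal{F}(u),\xi\rangle$, the weak equation becomes
\begin{equation*}
u_t = K\!\left(\nabla\mathcal{F}(u)\right) =: G(u), \qquad u(0)=u_0 .
\end{equation*}

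Next I would check that $G:H_0^1(\Omega)\to H_0^1(\Omega)$ is locally Lipschitz. The crucial ingredient is the one-dimensional Sobolev embedding $H^1(\Omega)\hookrightarrow L^\infty(\Omega)$, which shows that $v\mapsto\mathcal{F}(v)=\tfrac{1}{p+1}v^{p+1}$ sends bounded sets of $H_0^1(\Omega)$ into bounded sets of $L^2(\Omega)$ and is Lipschitz there, using the factorization $v^{p+1}-w^{p+1}=(v-w)\sum_{j=0}^{p}v^{j}w^{p-j}$ together with $\|v\|_{L^\infty},\|w\|_{L^\infty}\le C\|v\|_1,\,C\|w\|_1$. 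Composing with the bounded linear maps $\nabla:L^2(\Omega)\to H^{-1}(\Omega)$ and $K:H^{-1}(\Omega)\to H_0^1(\Omega)$ yields
\begin{equation*}
\|G(v)-G(w)\|_1 \le C(R)\,\|v-w\|_1 \qquad\text{whenever } \|v\|_1,\|w\|_1\le R .
\end{equation*}
The Picard--Lindel\"of theorem in the Banach space $H_0^1(\Omega)$ then supplies a maximal interval $[0,t^{*})$ carrying a unique solution $u\in C^1([0,t^{*}),H_0^1(\Omega))$, and uniqueness on overlapping subintervals follows at once from this Lipschitz bound and Gr\"onwall's inequality.

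Finally I would exclude finite-time blow-up. By Theorem~\ref{thrm01}, every solution of \eqref{bbmbur_v2} obeys $\|u(t)\|_1=\|u_0\|_1$ as long as it exists, so the $H_0^1$ norm remains bounded by $\|u_0\|_1$ on $[0,t^{*})$; the standard continuation criterion then forces $t^{*}>T$, and since $T>0$ was arbitrary the solution is global. The constructed $u$ lies in $C^1([0,T],H_0^1(\Omega))\subset L^\infty(0,T,H_0^1(\Omega))$, and $(u(x,0),\xi)=(u_0,\xi)$ holds by construction, which finishes the argument. An alternative route is a Faedo--Galerkin scheme — solve finite-dimensional projections, derive the same energy identity, and pass to the limit via the Aubin--Lions lemma — but the fixed-point argument is more economical here since Theorem~\ref{thrm01} already provides the required a priori estimate. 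I expect the only real obstacle to be the local Lipschitz property of $G$; everything else is routine, and it is exactly at that step that the low spatial dimension (hence $H^1\hookrightarrow L^\infty$) is indispensable, since the nonlinearity $u^{p+1}$ admits no comparable control in higher dimensions.
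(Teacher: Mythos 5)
Your argument is sound, but there is nothing in the paper to measure it against: the authors state this theorem without proof, deferring entirely to the cited references (Atouani--Omrani and Thom\'ee). What you have written is the standard self-contained proof for BBM/Sobolev-type equations, and it holds together. The key structural step --- inverting $I-\mu\Delta$ via Lax--Milgram applied to $a(v,w)=(v,w)+\mu(\nabla v,\nabla w)$ so that \eqref{bbmbur_v2} becomes the ODE $u_t=G(u)$ with $G$ locally Lipschitz on $H_0^1(\Omega)$ --- is exactly the mechanism that makes these regularized wave equations well posed without any smoothing, and your use of the one-dimensional embedding $H^1(\Omega)\hookrightarrow L^\infty(\Omega)$ together with the factorization of $v^{p+1}-w^{p+1}$ is the right (and, as you note, dimension-sensitive) way to get the local Lipschitz bound for the nonlinearity. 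One caveat worth making explicit in a full write-up: your global continuation leans on Theorem~\ref{thrm01}, which the paper also states without proof; since that identity is obtained precisely by taking $\xi=u(t)$ in \eqref{bbmbur_v2} and noting $\left(\mathcal{F}(u),\nabla u\right)=\tfrac{1}{(p+1)(p+2)}\int_\Omega \partial_x\!\left(u^{p+2}\right)dx=0$, you should include that two-line computation rather than cite the theorem as a black box (strictly what is conserved is $\|u\|^2+\mu\|\nabla u\|^2$, which is equivalent to $\|\cdot\|_1^2$ for $\mu>0$, and that is all the continuation argument needs). Compared with the Faedo--Galerkin compactness route that the cited references tend to follow, your fixed-point argument is more economical, avoids Aubin--Lions, and delivers the extra regularity $u\in C^1([0,T],H_0^1(\Omega))$ for free.
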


%
%

\subsection{Semi-discrete Galerkin Scheme}

For any $0<h<1$ let $S_{h}$ of $H_{0}^{1}(\Omega )$ be a finite dimensional
subspace such that for $u\in H_{0}^{1}(\Omega)\cap H^{3}(\Omega )$, $\exists$
a constant $C$ independent of $h$~\cite{Noureddine2013, ThomeeVidar2006,
ciarlet} such that
\begin{equation}
\inf_{\xi \in S_{h}}\Vert u-\xi \Vert \leq Ch^{3}\|u\|_3.  \label{interp01}
\end{equation}%
Here it is our moto to look for solutions of a semi-discrete finite element
formulation of \eqref{grlw11} $u_{h}:[0,\ T]\rightarrow S_{h}$ such that
\begin{equation}
\left( u_{ht},\xi \right) + \left( \nabla u_{ht},\nabla \xi \right) =-\left(
\mathcal{F} (u_{h}),\nabla \xi \right) ,\ \ \forall\ \xi \in S_{h},
\label{bbmbur_v51}
\end{equation}
where $u_{h}(0)=u_{0,h}\in S_{h}$ approximates $u_{0}$. We start here first
by stating a priori bound of the solution of \eqref{bbmbur_v51} below before
establishing the original convergence result.

\begin{thrm}
\label{thrm04} Let $u_h \in S_h$ be a solution of \eqref{bbmbur_v51}. Then $%
u_h \in S_h$ satisfies
\begin{equation*}
\|u_h\|_1^2 = \|u_{0,h}\|_1^2,\ t\in\ (0,\ T],\
\end{equation*}
\text{and }\
\begin{equation*}
\|u_h\|_{L^\infty(L^\infty(\Omega))} \le C\|u_{0,h}\|_1
\end{equation*}
holds where $C$ is a positive constant.
\end{thrm}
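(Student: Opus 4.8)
The plan is to mimic the energy-conservation argument used for the continuous weak form in Theorem~\ref{thrm01}, but now carried out entirely inside the finite-dimensional subspace $S_h$. First I would take the test function $\xi = u_h \in S_h$ in \eqref{bbmbur_v51}, which is legitimate since $u_h(\cdot, t)\in S_h$ for every $t$. This gives
\begin{equation*}
\left(u_{ht}, u_h\right) + \left(\nabla u_{ht}, \nabla u_h\right) = -\left(\mathcal{F}(u_h), \nabla u_h\right).
\end{equation*}
The left-hand side is exactly $\tfrac{1}{2}\tfrac{d}{dt}\left(\|u_h\|^2 + \|\nabla u_h\|^2\right) = \tfrac{1}{2}\tfrac{d}{dt}\|u_h\|_1^2$ (using that, because $u_h\in S_h\subset H_0^1(\Omega)$, the quantity $\|u_h\|^2 + \|\nabla u_h\|^2$ is an equivalent $H^1$ norm). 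For the right-hand side I would show the nonlinear term vanishes: since $\mathcal{F}(u_h) = \tfrac{1}{p+1}u_h^{p+1}$, we have $\left(\mathcal{F}(u_h), \nabla u_h\right) = \tfrac{1}{p+1}\int_\Omega u_h^{p+1}\, \partial_x u_h\, dx = \tfrac{1}{(p+1)(p+2)}\int_\Omega \partial_x\!\left(u_h^{p+2}\right) dx = \tfrac{1}{(p+1)(p+2)}\left[u_h^{p+2}\right]_{\partial\Omega} = 0$, because $u_h$ vanishes on $\partial\Omega$ (it lies in $H_0^1(\Omega)$, and the boundary values are taken in the trace sense). Hence $\tfrac{d}{dt}\|u_h\|_1^2 = 0$, and integrating in $t$ yields $\|u_h(t)\|_1^2 = \|u_h(0)\|_1^2 = \|u_{0,h}\|_1^2$ for all $t\in(0, T]$, which is the first assertion.

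For the second assertion I would invoke a Sobolev embedding in one space dimension: $H^1(\Omega)\hookrightarrow L^\infty(\Omega)$ continuously on the bounded interval $\Omega = (a, b)$, so there is a constant $C$ (depending only on $\Omega$) with $\|v\|_{L^\infty(\Omega)} \le C\|v\|_1$ for all $v\in H_0^1(\Omega)$. Applying this with $v = u_h(\cdot, t)$ and then using the conservation identity just proved, $\|u_h(\cdot, t)\|_{L^\infty(\Omega)} \le C\|u_h(\cdot, t)\|_1 = C\|u_{0,h}\|_1$, uniformly in $t\in(0, T]$; taking the supremum over $t$ gives $\|u_h\|_{L^\infty(L^\infty(\Omega))} \le C\|u_{0,h}\|_1$.

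I do not anticipate a genuinely hard step here — the argument is a discrete replica of the continuous one, and the key structural fact (that the nonlinear term $(\mathcal{F}(u_h), \nabla u_h)$ integrates to a boundary term that vanishes) holds verbatim in $S_h$ because $S_h\subset H_0^1(\Omega)$ and no integration by parts that would generate spurious terms is needed. The only point requiring a word of care is justifying that $u_h$ is differentiable in $t$ so that $\tfrac{d}{dt}\|u_h\|_1^2$ makes sense and equals $2(u_{ht}, u_h) + 2(\nabla u_{ht}, \nabla u_h)$; this follows from the fact that, expanded in a basis of $S_h$, \eqref{bbmbur_v51} is an ODE system whose solution is $C^1$ in $t$ wherever it exists, and the a priori bound we derive in fact shows the solution stays bounded and therefore exists on all of $[0, T]$.
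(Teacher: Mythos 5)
Your argument is correct and is precisely the energy argument the paper has in mind: the paper itself omits the proof of this theorem (deferring to \cite{Battel_SKB_2018}), but its proof of the analogous full-discrete conservation result in Section~3 tests with $U^{n-1/2}$ and relies on exactly the same two ingredients you use --- the nonlinear term $\left(\mathcal{F}(u_h),\nabla u_h\right)$ vanishing as an exact derivative with zero boundary values, and the one-dimensional Sobolev embedding $H_0^1(\Omega)\hookrightarrow L^\infty(\Omega)$. Your closing remark on justifying $t$-differentiability via the underlying ODE system in $S_h$ is a welcome extra detail that the paper does not spell out.
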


\begin{proof}
  The proof is trivial, it follows from \cite{Battel_SKB_2018}.
\end{proof}
Our next goal is to establish the theoretical estimate of the error in the
semi-discrete scheme \eqref{bbmbur_v51} of \eqref{bbmbur_v2}. To that end
here we start by considering the following bilinear form
\begin{equation*}
\mathcal{A}(u,v)=(\nabla u,\nabla v),\ \forall\ u,\ v\in H_{0}^{1}(\Omega),
\end{equation*}%
which satisfies the boundedness property
\begin{equation}
|\mathcal{A}(u,v)|\leq M\Vert u\Vert _{1}\Vert v\Vert _{1},\forall \ u,\
v\in H_{0}^{1}(\Omega)  \label{boundedness}
\end{equation}%
and coercivity property (on $\Omega $)
\begin{equation}
\mathcal{A}(u,u)\geq \alpha \Vert u\Vert _{1},\forall \ u\in H_{0}^{1}
(\Omega),\ \text{for some }\alpha \in \mathbb{R}.  \label{coercivity}
\end{equation}%
Let $\tilde{u}$ be an auxiliary projection of $u$ \cite{Noureddine2013,
ciarlet, ThomeeVidar2006}, then $\mathcal{A}$ satisfies
\begin{equation}
\mathcal{A}(u-\tilde{u},\xi )=0,\ \xi \in S_{h}.  \label{projection}
\end{equation}

Now the rate of convergence (accuracy) in such a spatial approximation %
\eqref{bbmbur_v51} of \eqref{bbmbur_v2} is given by the following theorem.

\begin{thrm}
Let $u_h\in S_h$ be a solution of \eqref{bbmbur_v51} and $u\in H_0^1(\Omega)$
be that of \eqref{bbmbur_v2}, then the following inequality holds
\begin{equation*}
\|u - u_h\|\le C h^3,
\end{equation*}
where $C>0$ if $\|u(0) - u_{0, h}\|\le Ch^3$ holds.
\end{thrm}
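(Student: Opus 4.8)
The plan is to use the standard duality/projection splitting for finite element error analysis of evolution equations. First I would introduce the auxiliary (elliptic/Ritz) projection $\tilde u \in S_h$ of the exact solution $u$, characterized by $\mathcal{A}(u-\tilde u,\xi)=0$ for all $\xi\in S_h$ (together with an $L_2$-component so that the full bilinear form $(\cdot,\cdot)+\mu\mathcal A(\cdot,\cdot)$ is reproduced), and split the error as
\begin{equation*}
u - u_h = (u - \tilde u) + (\tilde u - u_h) =: \rho + \theta .
\end{equation*}
The term $\rho$ is controlled purely by approximation theory: using \eqref{interp01} and the standard elliptic projection estimates one gets $\|\rho\|\le Ch^3\|u\|_3$, and similarly $\|\rho_t\|\le Ch^3\|u_t\|_3$ by differentiating the projection equation in $t$ (legitimate since $\mathcal A$ is time-independent). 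So the whole game is to bound $\theta\in S_h$.

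Next I would derive the error equation for $\theta$. Subtracting \eqref{bbmbur_v51} from \eqref{bbmbur_v2} tested against $\xi\in S_h$, and inserting $\tilde u$, gives for all $\xi\in S_h$
\begin{equation*}
(\theta_t,\xi) + \mu\,\mathcal A(\theta,\xi) = -(\rho_t,\xi) - \mu\,\mathcal A(\rho,\xi) - \big(\mathcal F(u)-\mathcal F(u_h),\nabla\xi\big).
\end{equation*}
By the defining property \eqref{projection} of the projection the term $\mathcal A(\rho,\xi)$ vanishes (this is exactly why one uses the Ritz projection). Then choose $\xi=\theta$. The left side becomes $\tfrac12\frac{d}{dt}\|\theta\|^2 + \mu\|\nabla\theta\|^2$, and using the coercivity \eqref{coercivity} this dominates a multiple of $\|\theta\|_1^2$. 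On the right, $(\rho_t,\theta)$ is handled by Cauchy--Schwarz and Young's inequality, contributing $Ch^6\|u_t\|_3^2 + \varepsilon\|\theta\|^2$.

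The main obstacle is the nonlinear term $\big(\mathcal F(u)-\mathcal F(u_h),\nabla\theta\big)$. I would write $\mathcal F(u)-\mathcal F(u_h) = \tfrac{1}{p+1}(u^{p+1}-u_h^{p+1})$ and factor it as $(u-u_h)\,G(u,u_h)$ where $G = \sum_{j=0}^{p} u^{j}u_h^{p-j}$. The key input is the a priori $L_\infty$ bounds on $u$ (Theorem~\ref{thrm01}) and on $u_h$ (Theorem~\ref{thrm04}), both bounded by $C\|u_0\|_1$, which make $\|G\|_{L^\infty}$ bounded by a constant depending only on the data; hence $\|\mathcal F(u)-\mathcal F(u_h)\|\le C\|u-u_h\|\le C(\|\rho\|+\|\theta\|)$. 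Then
\begin{equation*}
\big|\big(\mathcal F(u)-\mathcal F(u_h),\nabla\theta\big)\big| \le C(\|\rho\|+\|\theta\|)\|\nabla\theta\| \le \frac{\mu}{2}\|\nabla\theta\|^2 + C\big(\|\rho\|^2+\|\theta\|^2\big),
\end{equation*}
so the $\|\nabla\theta\|^2$ part is absorbed into the coercivity term on the left. Collecting everything yields a differential inequality of Gronwall type,
\begin{equation*}
\frac{d}{dt}\|\theta\|^2 \le C\|\theta\|^2 + C h^6\big(\|u_t\|_3^2 + \|u\|_3^2\big).
\end{equation*}
Integrating from $0$ to $t$, using $\|\theta(0)\| \le \|u(0)-u_{0,h}\| + \|\rho(0)\| \le Ch^3$ by hypothesis and approximation, and applying Gronwall's lemma gives $\|\theta(t)\|\le Ch^3$ on $(0,T]$ with $C$ depending on $T$ and on $\|u\|_{L^\infty(H^3)}$, $\|u_t\|_{L^\infty(H^3)}$. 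The triangle inequality $\|u-u_h\|\le\|\rho\|+\|\theta\|\le Ch^3$ then finishes the proof. The one point needing care is justifying the regularity $u, u_t \in H^3$ used in the approximation estimates; I would either assume it as part of the standing hypotheses on the data or cite the earlier existence/regularity results and \cite{Battel_SKB_2018, ThomeeVidar2006}.
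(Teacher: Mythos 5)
Your overall strategy --- the Ritz-projection splitting $u-u_h=\rho+\theta$, the $O(h^3)$ bounds on $\rho$ and $\rho_t$, the Lipschitz treatment of $\mathcal{F}(u)-\mathcal{F}(u_h)$ via the a priori $L_\infty$ bounds on $u$ and $u_h$, and Gronwall --- is exactly the paper's. But there is one concrete error: you have written the error equation as if the underlying equation were parabolic. The GEW weak form \eqref{bbmbur_v2} carries the gradient term on the \emph{time derivative}, $\mu(\nabla u_t,\nabla\xi)$, so subtracting \eqref{bbmbur_v51} from \eqref{bbmbur_v2} produces $(\theta_t,\xi)+\mu(\nabla\theta_t,\nabla\xi)$ on the left-hand side, and the term killed by the projection is $\mathcal{A}(\rho_t,\xi)$ (obtained by differentiating \eqref{projection} in $t$), not $\mathcal{A}(\rho,\xi)$. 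Consequently, testing with $\xi=\theta$ gives $\tfrac12\tfrac{d}{dt}\bigl(\|\theta\|^2+\mu\|\nabla\theta\|^2\bigr)$ on the left, \emph{not} $\tfrac12\tfrac{d}{dt}\|\theta\|^2+\mu\|\nabla\theta\|^2$. There is therefore no standalone dissipative term $\mu\|\nabla\theta\|^2$ into which to absorb the $\tfrac{\mu}{2}\|\nabla\theta\|^2$ arising from the nonlinear term; that absorption step, as written, is not available for this pseudo-parabolic equation.

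The gap is repairable, and the repair lands you precisely on the paper's proof: since the left side is the derivative of the full (weighted) $H^1$ norm squared, you leave the $\|\nabla\theta\|^2$ contribution from the nonlinear term on the right and run Gronwall on $\|\theta\|_1^2$ rather than on $\|\theta\|^2$, i.e.
\begin{equation*}
\frac{d}{dt}\|\theta\|_1^2 \le C\bigl(\|\rho_t\|^2+\|\rho\|^2+\|\theta\|_1^2\bigr),
\end{equation*}
which still yields $\|\theta\|_1\le C(u,T)\,h^3$, and the triangle inequality finishes the argument. Your remarks about controlling $\theta(0)$ through the hypothesis on $\|u(0)-u_{0,h}\|$ and about the regularity $u,u_t\in H^3$ needed for the projection estimates are correct, and in fact slightly more careful than the paper, which simply takes $\theta(0)=0$.
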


%
\begin{proof}
Letting
$
   \mathcal{E} = u-u_h  = \psi +\theta,
$
    where  $ \psi = u - \tilde u$ and  $\theta = \tilde u - u_h$,
we write
\begin{align*}
\alpha \|u-\tilde u\|_1^2 &\le  \mathcal{A}(u-\tilde u, u-\tilde u) \\
                           & =  \mathcal{A}(u-\tilde u, u-\xi),\ \xi\in S_h. 
\end{align*}
From \eqref{boundedness}, \eqref{projection} and \cite{ThomeeVidar2006}  it follows that
\begin{equation}\label{pro_bound}
  \|u-\tilde u\|_1 \le \inf_{\xi\in S_h}\| u - \xi\|_1,
\end{equation}
and thus \eqref{interp01} and \eqref{pro_bound} confirms the following inequalities
\[
 \|\psi\|_1 \le C h^2 \|u\|_3, \ \text{and  so   \  }\ \|\psi \| \le  C h^3 \|u\|_3.
\]
Now applying $\frac{\partial}{\partial t}$ on \eqref{projection} and having some simplifications yields~\cite{ThomeeVidar2006}
\[
   \|\psi_t\|\le  C h^3 \|u_t\|_3.
\]
Also we subtract  \eqref{bbmbur_v51} from \eqref{bbmbur_v2} to obtain
\begin{equation}\label{bbmbur_v544}
  (\theta_t, \xi) + (\nabla \theta_t, \nabla\xi)   =  (\psi_t, \xi) - (\mathcal{F}(u)-\mathcal{F}(u_h),\nabla \xi).
\end{equation}
Now we substitute  $\xi = \theta$ in \eqref{bbmbur_v544}, and then apply Cauchy-Schwarz inequality  to obtain
\[
 \frac{1}{2}\frac{d}{dt} \|\theta\|_1^2  \le    \|\psi_t\|\|\theta\|  +\|\mathcal{F}(u) - \mathcal{F}(u_h)\| \|\nabla \theta\|.
\]
Here
\[
 \|\mathcal{F}(u) - \mathcal{F}(u_h)\| \le C(\|\psi\| + \|\theta\|),
\]
comes from Lipschitz conditions of $\mathcal{F}$ and boundedness of $u$ and $u_h$.  Thus
\[
 \frac{d}{dt} \|\theta\|_1^2 \le   C\left( \|\psi_t\|^2 +  \|\psi\|^2  + \|\theta\|^2 + \|\nabla \theta\|^2 \right).
\]
So
\[
 \|\theta\|_1^2 \le \|\theta(0)\|_1^2+  C\int_0^t \left( \|\psi_t\|^2 +  \|\psi\|^2  + \|\theta\|^2 + \|\nabla \theta\|^2 \right)dt.
\]
Hence Gronwall's lemma, bounds of $\psi$ and $\psi_t$ confirms
\[
  \|\theta\|_1 \le C(u) h^3,
\]
if $\theta(0) = 0$, completes the proof~\cite{ThomeeVidar2006, ciarlet}.
\end{proof}

\section{Full discrete scheme}

Here we aim to find solution of the semi-discrete problem \eqref{bbmbur_v51}
over $[0, T]$, $T>0$. Let $N$ be a positive full number and $\Delta t =
\frac{T}{N}$ so that $t^n = n\Delta t$, $n = 0,\ 1,\ 2,\ 3,\cdots,\ N.$ Here
we consider
\begin{equation*}
\phi^n = \phi(t^n),\ \quad \phi^{n-1/2} = \frac{\phi^n + \phi^{n-1}}{2}\quad
\& \quad \ \partial_t\phi^n = \frac{\phi^n - \phi^{n-1}}{\Delta t }.
\end{equation*}
Using the above notations we present a time discretized finite element
Galerkin scheme by
\begin{equation}
\left( \partial_t U^n,\xi \right) + \left( \nabla \partial_t U^n ,\nabla \xi
\right) = -\left( \mathcal{F} (U^{n-1/2}),\nabla \xi \right) ,\ \xi \in
S_{h},  \label{bbmbur_v5}
\end{equation}
where $U^0 = u_{0, h}$.

\begin{thrm}
If $U^n$ satisfies \eqref{bbmbur_v5} then
\begin{equation*}
\|U^J\|_1 = \|U^0\|_1\ \text{ for all }\ 1\le J\le N
\end{equation*}
and there exists a positive constant $C$ such that
\begin{equation*}
\|U^J\|_\infty \le C \|U^0\|_1\ \text{ for all} \ 1\le J\le N.
\end{equation*}
\end{thrm}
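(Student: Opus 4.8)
The plan is to carry out, at the fully discrete level, the same energy argument that underlies Theorem~\ref{thrm01} and Theorem~\ref{thrm04}: choose the test function in \eqref{bbmbur_v5} so that the linear part of the scheme telescopes into a difference of $H^{1}$ norms while the nonlinear term vanishes identically. Since $U^{n},U^{n-1}\in S_{h}\subset H_{0}^{1}(\Omega)$, the midpoint value $U^{n-1/2}=\tfrac12(U^{n}+U^{n-1})$ is again an admissible element of $S_{h}$, so I would substitute $\xi=U^{n-1/2}$ in \eqref{bbmbur_v5}.

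For the left-hand side, using $\partial_{t}U^{n}=(U^{n}-U^{n-1})/\dt$ together with the elementary identity $(a-b,a+b)=\|a\|^{2}-\|b\|^{2}$ applied both to the $L_{2}$ inner product and to the inner product of gradients, one obtains
\[
\left(\partial_{t}U^{n},U^{n-1/2}\right)+\left(\nabla\partial_{t}U^{n},\nabla U^{n-1/2}\right)
=\frac{1}{2\dt}\Bigl(\|U^{n}\|_{1}^{2}-\|U^{n-1}\|_{1}^{2}\Bigr).
\]
For the right-hand side, set $\mathcal{G}(s)=\int_{0}^{s}\mathcal{F}(r)\,dr=\frac{1}{(p+1)(p+2)}s^{p+2}$, so that $\mathcal{G}'=\mathcal{F}$ and hence, pointwise, $\mathcal{F}(U^{n-1/2})\,\nabla U^{n-1/2}=\bigl(\mathcal{G}(U^{n-1/2})\bigr)_{x}$. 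Therefore
\[
-\left(\mathcal{F}(U^{n-1/2}),\nabla U^{n-1/2}\right)
=-\int_{a}^{b}\bigl(\mathcal{G}(U^{n-1/2})\bigr)_{x}\,dx
=-\Bigl[\mathcal{G}(U^{n-1/2})\Bigr]_{a}^{b}=0,
\]
because $U^{n-1/2}\in H_{0}^{1}(\Omega)$ is continuous and vanishes at $x=a$ and $x=b$. (The one-dimensional embedding $H^{1}\hookrightarrow L^{\infty}$ also shows $U^{n-1/2}$ is bounded, so $\mathcal{F}(U^{n-1/2})\in L_{2}(\Omega)$ and the inner product is well defined.)

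Combining the two computations gives $\|U^{n}\|_{1}^{2}=\|U^{n-1}\|_{1}^{2}$ for every $n\ge 1$, and a straightforward induction starting from $U^{0}=u_{0,h}$ yields $\|U^{J}\|_{1}=\|U^{0}\|_{1}$ for all $1\le J\le N$, which is the first assertion. For the second, I would invoke the Sobolev embedding $H^{1}(\Omega)\hookrightarrow L^{\infty}(\Omega)$ valid in one space dimension: there is a constant $C>0$, depending only on $|b-a|$ and in particular independent of $h$ and $\dt$, with $\|v\|_{\infty}\le C\|v\|_{1}$ for all $v\in H^{1}(\Omega)$. Applying this with $v=U^{J}$ and using the conservation just established gives $\|U^{J}\|_{\infty}\le C\|U^{J}\|_{1}=C\|U^{0}\|_{1}$ for all $1\le J\le N$.

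There is no genuine obstacle here: in contrast to the convergence estimate, neither a discrete Gronwall inequality nor any smallness condition on $\dt$ is required, because the Crank--Nicolson-type time stepping together with the midpoint sampling $\mathcal{F}(U^{n-1/2})$ makes the $H^{1}$ energy \emph{exactly} conserved. The only two points needing a line of care are (i) verifying that $U^{n-1/2}$ genuinely lies in $S_{h}$ so that it is a legitimate test function, and (ii) noting that the nonlinear term is not merely bounded but vanishes outright --- which is exactly where the homogeneous boundary condition $U(a,t)=U(b,t)=0$, i.e. $S_{h}\subset H_{0}^{1}(\Omega)$, is used.
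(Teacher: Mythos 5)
Your proposal is correct and follows essentially the same route as the paper: substitute $\xi=U^{n-1/2}$ in \eqref{bbmbur_v5}, observe that the left-hand side telescopes into $\frac{1}{2\Delta t}(\|U^n\|_1^2-\|U^{n-1}\|_1^2)$ while $(\mathcal{F}(U^{n-1/2}),\nabla U^{n-1/2})=0$, then sum over $n$ and apply the Sobolev embedding for the $L^\infty$ bound. Your write-up merely supplies the details (the antiderivative $\mathcal{G}$ and the role of the homogeneous boundary conditions) that the paper leaves implicit.
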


\begin{proof}
Substituting $\xi = U^{n-1/2}$ in \eqref{bbmbur_v5}  it is easy to see that
\begin{equation}\label{bbmbur_v11}
 \partial_t \left(\|U^n\|^2 + \|\nabla U^n\|^2  \right) = - \left(\mathcal{F}(U^{n-1/2}), \nabla U^{n - 1/2} \right) = 0.
\end{equation}
Thus the proof of the first part  of the theorem follows from  a sum from $n = 1$ to $J$ and that of the second part follows from  the Sovolev embedding theorem~\cite{ThomeeVidar2006}.
\end{proof}
Now we focus on to establishing the theoretical upper bound of the error in
such a full discrete approximation \eqref{bbmbur_v11} at $t = t^n$.

\begin{thrm}
Let $h$ and $\Delta t$ be sufficiently small, then
\begin{equation*}
\|u^j - U^j\|_\infty \le C(u, T) (h^3 + \Delta t^2) \text{ for } 1\le j \le
N \text{ and } u_0^h = \tilde u(0)
\end{equation*}
where $C$ is independent of $h$ and $\Delta t$.
\end{thrm}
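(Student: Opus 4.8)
The plan is to run the standard energy/projection argument for Crank--Nicolson finite element discretizations, splitting the global error through the auxiliary projection $\tilde u$ of \eqref{projection}. Write $u^j-U^j=\rho^j+\theta^j$ with $\rho^j=u^j-\tilde u^j$ and $\theta^j=\tilde u^j-U^j\in S_h$. The $\rho$-part is purely spatial and essentially static in time: \eqref{interp01}, \eqref{pro_bound} together with the corresponding $L^\infty$ bound for the auxiliary projection of piecewise quadratics give $\|\rho^j\|_\infty\le Ch^3$, and differentiating \eqref{projection} in $t$ gives in addition $\|\rho_t\|\le Ch^3\|u_t\|_3$. Since $\Omega\subset\mathbb{R}$ we have $H^1(\Omega)\hookrightarrow L^\infty(\Omega)$, so once we show $\|\theta^j\|_1\le C(u,T)(h^3+\Delta t^2)$ the triangle inequality finishes the proof.

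To estimate $\theta^j$ I would write \eqref{bbmbur_v2} at the half-step $t^{n-1/2}$, subtract \eqref{bbmbur_v5}, and use the projection identity $\mathcal{A}(\rho^n,\xi)=0$ (hence $(\nabla\partial_t\rho^n,\nabla\xi)=0$ for $\xi\in S_h$) to reach an error equation of the form
\[
(\partial_t\theta^n,\xi)+(\nabla\partial_t\theta^n,\nabla\xi)=-(\partial_t\rho^n,\xi)-(\omega^n,\xi)-(\nabla\omega^n,\nabla\xi)-\big(\mathcal F(u(t^{n-1/2}))-\mathcal F(U^{n-1/2}),\nabla\xi\big),
\]
where $\omega^n=u_t(t^{n-1/2})-\partial_t u^n$ is the Crank--Nicolson truncation error. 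A Taylor expansion with integral remainder gives the time-second-order bound $\sum_{n}\Delta t\,\|\omega^n\|_1^2\le C\Delta t^4$, while $\|\partial_t\rho^n\|\le\frac1{\Delta t}\int_{t^{n-1}}^{t^n}\|\rho_t\|\,ds\le Ch^3$. For the nonlinear term I would split $\mathcal F(u(t^{n-1/2}))-\mathcal F(U^{n-1/2})$ as $[\mathcal F(u(t^{n-1/2}))-\mathcal F(\bar u^{n-1/2})]+[\mathcal F(\bar u^{n-1/2})-\mathcal F(U^{n-1/2})]$ with $\bar u^{n-1/2}=\tfrac12(u^n+u^{n-1})$: the first bracket is $O(\Delta t^2)$ because $\bar u^{n-1/2}-u(t^{n-1/2})=O(\Delta t^2)$, and the second is controlled by $C(\|\rho^{n-1/2}\|+\|\theta^{n-1/2}\|)\le C(h^3+\|\theta^{n-1/2}\|)$ via the Lipschitz bound for $\mathcal F(v)=v^{p+1}/(p+1)$, the Lipschitz constant being uniform because $u$ and $U^n$ are uniformly bounded in $L^\infty$ (the latter from the full-discrete a~priori stability estimate proved just above).

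Now take $\xi=\theta^{n-1/2}$. The left side telescopes, $(\partial_t\theta^n,\theta^{n-1/2})+(\nabla\partial_t\theta^n,\nabla\theta^{n-1/2})=\frac1{2\Delta t}\big(\|\theta^n\|_1^2-\|\theta^{n-1}\|_1^2\big)$, and bounding the right side by Cauchy--Schwarz and Young's inequality and multiplying by $2\Delta t$ yields $\|\theta^n\|_1^2-\|\theta^{n-1}\|_1^2\le C\Delta t(h^3+\Delta t^2)^2+C\Delta t\,\|\theta^{n-1/2}\|_1^2$. Summing over $n=1,\dots,j$, using $\theta^0=0$ (which is precisely the hypothesis $u_0^h=\tilde u(0)$) and $j\Delta t\le T$, and then invoking the discrete Gronwall inequality to absorb the $\sum_n\Delta t\,\|\theta^{n-1/2}\|_1^2$ term gives $\|\theta^j\|_1\le C(u,T)(h^3+\Delta t^2)$ for all $1\le j\le N$. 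Combining with $\|\rho^j\|_\infty\le Ch^3$ through the triangle inequality and $H^1\hookrightarrow L^\infty$ completes the argument.

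The main obstacle is the nonlinear term: one must ensure the Lipschitz constant of $\mathcal F$ is uniform along the whole discrete trajectory, which is exactly where the unconditional bound $\|U^n\|_1=\|U^0\|_1$ (hence a uniform $L^\infty$ bound on $U^n$) is needed; without it one would be forced into an induction/bootstrap on $\|\theta^n\|$. A secondary technical point is getting the $\Delta t^2$ order cleanly: both $\omega^n$ and $\bar u^{n-1/2}-u(t^{n-1/2})$ must be expanded to second order, which tacitly requires $u$ to be smooth enough in time (finite $\|u_{ttt}\|$-type norms over $[0,T]$); these norms, together with $\|u\|_3$ and $\|u_t\|_3$, are what the constant $C(u,T)$ collects.
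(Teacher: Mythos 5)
Your proposal is correct and follows essentially the same route as the paper: the same splitting $u^j-U^j=(u^j-\tilde u^j)+(\tilde u^j-U^j)$ through the auxiliary projection, the same choice of test function $\theta^{n-1/2}$, the same $O(\Delta t^2)$ truncation bound via $u_{ttt}$, the same Lipschitz treatment of $\mathcal F$ using the uniform $L^\infty$ bounds, and the same discrete Gronwall plus Sobolev embedding to conclude. Your write-up is in fact somewhat more careful than the paper's at the half-step nonlinearity (the explicit split between $u(t^{n-1/2})$ and the time average), but it is not a different argument.
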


\begin{proof}
Let
\begin{align*}
\mathcal{E}^{n} &= u^n - U^n = \psi^n + \theta^n
\end{align*}
where $\psi^n =  u^n - \tilde{u^n}$, $\theta^n = \tilde{u^n} - U^n$, $u^n = u(t^n)$, and $\tilde{u^n} = \tilde u(t^n)$.
From \eqref{bbmbur_v2} and \eqref{bbmbur_v5} along with auxiliary projection defined in the previous section the following equality holds
\begin{equation}\label{bbmbur_v6}
(\partial_t \theta^n,\xi) + (\nabla\partial_t \theta^n, \nabla \xi )  = (\partial_t \psi^n,\xi) + (\tau^n,\xi) + (\nabla \tau^n,\nabla\xi)
+ \left( \mathcal{F}(u^{n-1/2}) - \mathcal{F}(U^{n-1/2}), \nabla \xi \right),
\end{equation}
where $\tau^n  = u^{n-1/2} - \partial_t u^n$.   Now substituting  $\xi$  by  $\theta^{n-1/2}$ in \eqref{bbmbur_v6}  yields
\begin{equation}\label{bbmbur_v7}
\frac{1}{2}\partial_t \|\theta^n\|^2_1  = C \left( \|\partial_t \psi^n\|^2 + \|\tau^n\|_1^2 + \|\theta^{n-1/2}\|_1^2 +
 \left\|\mathcal{F}(u^{n-1/2}) - \mathcal{F}(U^{n-1/2} \right\|^2\right).
\end{equation}
Now
\begin{equation}\label{bbmbur_v8}
  \|\tau^n\|^2 \le C \dt^3 \int_{t_{n-1}}^{t_n} \|u_{ttt}(s)\|^2 ds,
\end{equation}
and  from  boundedness of $\|U^n\|_\infty$ and $\|u^n\|_\infty$ it yields
\begin{equation}\label{bbmbur_v9}
 \left\|\mathcal{F}(u^{n-1/2}) - \mathcal{F}(U^{n-1/2} \right\| =  C\left(\|\theta^{n-1/2} +\|\psi^{n-1/2}\|\|  \right)
\end{equation}
since $\mathcal{F}$ is a Lipschitz function.   Thus  from \eqref{bbmbur_v7}, \eqref{bbmbur_v8} and \eqref{bbmbur_v9}  it follows that
\begin{eqnarray}\label{bbmbur_v10}
\partial_t \|\theta^n\|^2_1  & \le C\|\theta^{n-1/2} \|_1^2 +  C \left( \|\partial_t \psi^n\|^2 + \|\psi^n\|^2  +  \|\psi^{n-1}\|^2 \right. \nonumber\\
        & \qquad         +  \left. \dt^3 \int_{t_{n-1}}^{t_n} \|u_{ttt}(s)\|^2 ds \right).
\end{eqnarray}
So \eqref{bbmbur_v10}  can be simplified as
\begin{align*}
(1-C\dt)\|\theta^n\|^2_1  & \le (1+C\dt)\|\theta^{n-1/2} \|_1^2 +  C \dt \left( \|\partial_t \psi^n\|^2 + \right.\\
                            &\qquad    \left.   \|\psi^n\|^2 +  \|\psi^{n-1}\|^2
                               +   \dt^3 \int_{t_{n-1}}^{t_n} \|u_{ttt}(s)\|^2 ds \right).
\end{align*}
Choosing $\dt>0$  so that $1- C\dt \ge 0$ and summing over $n = 1, (1) , J$, and from the bounds of $\|\psi^n\|$ and   $\|\partial_t \psi^n\|$  yields
\[
  \|\theta^n\|_1 \le C(u, T) (h^3 + \dt^2),
\]
and the rest follows from the triangular inequality  and sobolev embedding theorem~\cite{ThomeeVidar2006, ciarlet}.
\end{proof}

\section{Construction and Implementation of the method}

We take into account a uniformly spatially distributed set of knots $%
a=x_{0}<x_{1}<...<x_{N}=b$ over the solution interval $a\leq x\leq b$ and $%
h=x_{m+1}-x_{m},$ $m=0,1,2,...,N$. For this partition, we shall need the
following quadratic B-splines $\phi _{m}(x)$ at the points $x_{m},$ $%
m=0,1,2,...,N.$ Prenter \cite{prenter} identified following quadratic
B-spline functions $\phi _{m}(x)$, (\emph{m}= $-1(1)$ $N$), at the points $%
x_{m}$ which generate a basis over the interval $[a,b]$ by
\begin{equation}
\begin{array}{l}
\phi _{m}(x)=\frac{1}{h^{2}}\left\{
\begin{array}{ll}
(x_{m+2}-x)^{2}-3(x_{m+1}-x)^{2}+3(x_{m}-x)^{2},~~ & ~x\in \lbrack
x_{m-1},x_{m}), \\
(x_{m+2}-x)^{2}-3(x_{m+1}-x)^{2},~~ & ~x\in \lbrack x_{m},x_{m+1}), \\
(x_{m+2}-x)^{2},~~ & ~x\in \lbrack x_{m+1},x_{m+2}), \\
0~ & ~otherwise.%
\end{array}%
\right.%
\end{array}
\label{3}
\end{equation}%
We search the approximation $U_{N}(x,t)$ to the solution $U(x,t),$ which use
these splines as the trial functions
\begin{equation}
U_{N}(x,t)=\sum_{j=-1}^{N}\phi _{j}(x)\delta _{j}(t),\   \label{4}
\end{equation}%
in which unknown parameters $\delta _{j}(t)$ will be computed by using the
boundary and weighted residual conditions. In each element, using $h\eta
=x-x_{m}$ $(0\leq \eta \leq 1)$ local coordinate transformation for the
finite element $[x_{m},x_{m+1}],$ quadratic B-spline shape functions $(\ref%
{3})$ in terms of $\eta $ over the interval $[0,1]$ can be reformulated as
\begin{equation}
\begin{array}{l}
\phi _{m-1}=(1-\eta )^{2}, \\
\phi _{m}=1+2\eta -2\eta ^{2}, \\
\phi _{m+1}=\eta ^{2}.%
\end{array}
\label{5}
\end{equation}%
All quadratic B-splines, except that $\phi _{m-1}(x),\phi _{m}(x)$ and $\phi
_{m+1}(x)$ are zero over the interval $[x_{m},x_{m+1}].$ Therefore
approximation function $(\ref{4})$ over this element can be given in terms
of the basis functions $(\ref{5})$ as

\begin{equation}
U_{N}(\eta ,t)=\sum_{j=m-1}^{m+1}\delta _{j}\phi _{j}.  \label{6}
\end{equation}%
Using quadratic B-splines $(\ref{5})$ and the approximation function $(\ref%
{6}),$ the nodal values $U_{m}$ and $U_{m}^{\prime }$ at the knot are found
in terms of element parameters $\delta _{m}$ as follows:
\begin{equation}
\begin{array}{l}
U_{m}=U(x_{m})=\delta _{m-1}+\delta _{m}, \\
U_{m}^{\prime }=U^{\prime }(x_{m})=2(\delta _{m}-\delta _{m-1}).%
\end{array}
\label{7}
\end{equation}%
Here weight functions $L_{m}$ are used as linear B-splines. The linear
B-splines $L_{m}$ at the knots $x_{m}$ are identified as \cite{prenter}:

\begin{equation}
\begin{array}{l}
L_{m}(x)=\frac{1}{h}\left\{
\begin{array}{ll}
(x_{m+1}-x)-2(x_{m}-x),~~ & ~x\in \lbrack x_{m-1},x_{m}), \\
(x_{m+1}-x),~~ & ~x\in \lbrack x_{m},x_{m+1}), \\
0~ & ~otherwise.%
\end{array}%
\right.%
\end{array}
\label{lin}
\end{equation}

A characteristic finite interval $[x_{m},x_{m+1}]$ is turned into the
interval $[0,1]$ by local coordinates $\eta $ concerned with the global
coordinates using $h\eta =x-x_{m}$ $(0\leq \eta \leq 1).$ So linear
B-splines $L_{m}$ are given as

\begin{equation}
\begin{array}{l}
L_{m}=1-\eta \\
L_{m+1}=\eta .%
\end{array}
\label{lin1}
\end{equation}%
Using the Petrov-Galerkin method to Eq.$(\ref{gew}),$ we obtain the weak
form of Eq.$(\ref{gew})$ as

\begin{equation}
\int_{a}^{b}L(U_{t}+\varepsilon U^{p}U_{x}-\mu U_{xxt})dx=0.  \label{8}
\end{equation}%
Applying the change of variable $x\rightarrow \eta $ into Eq.$(\ref{8})$
gives rise to

\begin{equation}
\int_{0}^{1}L\left( U_{t}+\frac{\varepsilon }{h}\hat{U}^{p}U_{\eta }-\frac{%
\mu }{h^{2}}U_{\eta \eta t}\right) d\eta =0,  \label{9}
\end{equation}%
where $\hat{U}$ is got to be constant over an element to make the integral
easier. Integrating Eq.$(\ref{9})$ by parts and using Eq.$(\ref{gew})$ leads
to

\begin{equation}
\int_{0}^{1}[L(U_{t}+\lambda U_{\eta })+\beta L_{\eta }U_{\eta t}]d\eta
=\beta LU_{\eta t}|_{0}^{1},  \label{100}
\end{equation}%
where $\lambda =\frac{\varepsilon \hat{U}^{p}}{h}$ and $\beta =\frac{\mu }{%
h^{2}}.$ Choosing the weight functions $L_{m}$ with linear B-spline shape
functions given by $(\ref{lin1})$ and replacing approximation $(\ref{6})$
into Eq.$(\ref{100})$ over the element $[0,1]$ produces

\begin{equation}
\sum_{j=m-1}^{m+1}[(\int_{0}^{1}L_{i}\phi _{j}+\beta L_{i}^{\prime }\phi
_{j}^{\prime })d\eta -\beta L_{i}\phi _{j}^{\prime }|_{0}^{1}~~]\dot{\delta}%
_{j}^{e}+\sum_{j=m-1}^{m+1}(\lambda \int_{0}^{1}L_{i}\phi _{j}^{\prime
}d\eta )\delta _{j}^{e}=0,  \label{11}
\end{equation}%
which can be obtained in matrix form as

\begin{equation}
\lbrack A^{e}+\beta (B^{e}-C^{e})]\dot{\delta}^{e}+\lambda D^{e}\delta
^{e}=0.  \label{120}
\end{equation}
In the above equations and overall the article, the dot denotes
differentiation according to $t$ and $\delta ^{e}=(\delta _{m-1},\delta
_{m},\delta _{m+1},\delta _{m+2})^{T}$ are the element parameters. $%
A_{ij}^{e},B_{ij}^{e},C_{ij}^{e}$ and $D_{ij}^{e}$ are the $2\times 3$
rectangular element matrices represented by
\begin{equation*}
A_{ij}^{e}=\int_{0}^{1}L_{i}\phi _{j}d\eta =\frac{1}{12}\left[
\begin{array}{ccc}
3 & 8 & 1 \\
1 & 8 & 3%
\end{array}%
\right] ,
\end{equation*}

\begin{equation*}
B_{ij}^{e}=\int_{0}^{1}L_{i}^{\prime }\phi _{j}^{\prime }d\eta =\frac{1}{2}%
\left[
\begin{array}{ccc}
1 & 0 & -1 \\
-1 & 0 & 1%
\end{array}%
\right] ,
\end{equation*}

\begin{equation*}
C_{ij}^{e}=L_{i}\phi _{j}^{\prime }|_{0}^{1}=\left[
\begin{array}{ccc}
2 & -2 & 0 \\
0 & -2 & 2%
\end{array}%
\right] ,
\end{equation*}

\begin{equation*}
D_{ij}^{e}=\int_{0}^{1}L_{i}\phi _{j}^{\prime }d\eta =\frac{1}{3}\left[
\begin{array}{ccc}
-2 & 1 & 1 \\
-1 & -1 & 2%
\end{array}%
\right]
\end{equation*}%
where $i$ takes $m,m+1$ and $j$ takes $m-1,m,m+1$ for the typical element $%
[x_{m},x_{m+1}].$ A lumped value for $U$ is attained from ($\frac{%
U_{m}+U_{m+1}}{2}$)$^{p}$ as
\begin{equation*}
\lambda =\frac{\varepsilon }{2^{p}h}(\delta _{m-1}+2\delta _{m}+\delta
_{m+1})^{p}.
\end{equation*}%
Formally aggregating together contributions from all elements leads to the
matrix equation
\begin{equation}
\lbrack A+\beta (B-C)]\dot{\delta}+\lambda D\delta =0,  \label{13}
\end{equation}%
where global element parameters are $\delta =(\delta _{-1},\delta
_{0},...,\delta _{N},\delta _{N+1})^{T}$ and the $A$, $B,C$ and $\lambda D$
matrices are derived from the corresponding element matrices $%
A_{ij}^{e},B_{ij}^{e},C_{ij}^{e}$ and $D_{ij}^{e}.$ Row $m$ of each matrices
has the following form;
\begin{equation*}
\begin{array}{l}
A=\frac{1}{12}\left( 1,11,11,1,0\right) ,B=\frac{1}{3}(-1,1,1,-1,0), \\
C=(0,0,0,0,0), \\
\lambda D=\frac{1}{3}\left( -\lambda _{1},-\lambda _{1}-2\lambda
_{2},2\lambda _{1}+\lambda _{2},\lambda _{2},0\right)%
\end{array}%
\end{equation*}%
where
\begin{equation*}
\lambda _{1}=\frac{\varepsilon }{2^{p}h}\left( \delta _{m-1}+2\delta
_{m}+\delta _{m+1}\right) ^{p},\ \lambda _{2}=\frac{\varepsilon }{2^{p}h}%
\left( \delta _{m}+2\delta _{m+1}+\delta _{m+2}\right) ^{p}.
\end{equation*}%
Implementing the Crank-Nicholson approach $\delta =\frac{1}{2}(\delta
^{n}+\delta ^{n+1})$ and the forward finite difference $\dot{\delta}=\frac{%
\delta ^{n+1}-\delta ^{n}}{\Delta t}$ in Eq.$(\ref{120})$ we get the
following matrix system:
\begin{equation}
\lbrack A+\beta (B-C)+\frac{\lambda \Delta t}{2}D]\delta ^{n+1}=[A+\beta
(B-C)-\frac{\lambda \Delta t}{2}D]\delta ^{n}  \label{14}
\end{equation}%
where $\Delta t$ is time step. Implementing the boundary conditions ($\ref%
{Bou.Con.})$ to the system $(\ref{14})$, we make the matrix equation square.
This system is efficaciously solved with a variant of the Thomas algorithm
but in solution process, two or three inner iterations $\delta ^{n\ast
}=\delta ^{n}+\frac{1}{2}(\delta ^{n}-\delta ^{n-1})$ are also performed at
each time step to cope with the nonlinearity. As a result, a typical member
of the matrix system $(\ref{14})$ \ may be written in terms of the nodal
parameters $\delta ^{n}$ and $\delta ^{n+1}$ as:
\begin{equation}
\begin{array}{l}
\gamma _{1}\delta _{m-1}^{n+1}+\gamma _{2}\delta _{m}^{n+1}+\gamma
_{3}\delta _{m+1}^{n+1}+\gamma _{4}\delta _{m+2}^{n+1}= \\
\gamma _{4}\delta _{m-1}^{n}+\gamma _{3}\delta _{m}^{n}+\gamma _{2}\delta
_{m+1}^{n}+\gamma _{1}\delta _{m+2}^{n}%
\end{array}
\label{15}
\end{equation}%
where
\begin{equation*}
\begin{array}{l}
\gamma _{1}=\frac{1}{12}-\frac{\beta }{3}-\frac{\lambda \Delta t}{6},~~~\ \
\ \ \ \ \ \gamma _{2}=\frac{11}{12}+\frac{\beta }{3}-\frac{3\lambda \Delta t%
}{6}, \\
\gamma _{3}=\frac{11}{12}+\frac{\beta }{3}+\frac{3\lambda \Delta t}{6},~~~\
\ \ \ \ \ \ \gamma _{4}=\frac{1}{12}-\frac{\beta }{3}+\frac{\lambda \Delta t%
}{6}.%
\end{array}%
\end{equation*}%
To start the iteration for computing the unknown parameters, the initial
unknown vector $\delta ^{0}$ is calculated by using Eqs.($\ref{Bou.Con.}).$
Therefore, using the relations at the knots $U_{N}(x_{m},0)=U(x_{m},0)$, $%
m=0,1,2,...,N$ and $U_{N}^{^{\prime }}(x_{0},0)=U^{^{\prime }}(x_{N},0)=0$
related with a variant of the Thomas algorithm, the initial vector $\delta
^{0}$ is easily obtained from the following matrix form
\begin{equation*}
\left[
\begin{array}{cccccc}
1 & 1 &  &  &  &  \\
& 1 & 1 &  &  &  \\
&  &  & \ddots &  &  \\
&  &  &  & 1 & 1 \\
&  &  &  & -2 & 2%
\end{array}%
\right] \left[
\begin{array}{c}
\delta _{-1}^{0} \\
\delta _{0}^{0} \\
\vdots \\
\delta _{N-1}^{0} \\
\delta _{N}^{0}%
\end{array}%
\right] =\left[
\begin{array}{c}
U(x_{0},0) \\
U(x_{1},0) \\
\vdots \\
U(x_{N},0) \\
hU^{^{\prime }}(x_{N},0)%
\end{array}%
\right] .
\end{equation*}

\section{ Stability analysis}

In this section, to show the stability analysis of the numerical method, we
have used Fourier method based on Von-Neumann theory and presume that the
quantity $U^{p}$ in the nonlinear term $U^{p}U_{x}$ of the equation $(\ref%
{gew})$ is locally constant. Substituting the Fourier mode $\delta
_{j}^{n}=g^{n}e^{ijkh}$ where $k$ is mode number and $h$ is element size,
into scheme ($\ref{15})$
\begin{equation}
g=\frac{a-ib}{a+ib},  \label{16}
\end{equation}%
is obtained and where
\begin{equation}
\begin{array}{l}
a=\left( 11+4\beta \right) \cos \left( \frac{\theta }{2}\right) h+\left(
1-4\beta \right) \cos \left( \frac{3\theta }{2}\right) h, \\
b=2\lambda \Delta t[3\sin \left( \frac{\theta }{2}\right) h+\sin \left(
\frac{3\theta }{2}\right) h].%
\end{array}
\label{17}
\end{equation}%
$|g|$ is found $1$ so our linearized scheme is unconditionally stable.

\section{Computational results and discussions}

The objective of this section is to investigate the deduced algorithm using
different test problems relevant to the dispersion of single solitary waves,
interaction of two solitary waves and the evolution\ of solitons. For the
test problems, we have calculated the numerical solution of the GEW equation
for $p=2,3$ and $4$ using the homogenous boundary conditions and different
initial conditions. The $\mathit{L}_{2}$
\begin{equation*}
\mathit{L}_{2}=\left\Vert U^{exact}-U_{N}\right\Vert _{2}\simeq \sqrt{%
h\sum_{J=0}^{N}\left\vert U_{j}^{exact}-\left( U_{N}\right) _{j}\right\vert
^{2}},
\end{equation*}%
and $\mathit{L}_{\infty }$
\begin{equation*}
\ \mathit{L}_{\infty }=\left\Vert U^{exact}-U_{N}\right\Vert _{\infty
}\simeq \max_{j}\left\vert U_{j}^{exact}-\left( U_{N}\right) _{j}\right\vert
.
\end{equation*}%
error norms are considered to measure the efficiency and accuracy of the
present algorithm and to compare our results with both exact values, Eq.$(%
\ref{exct})$, as well as other results in the literature whenever available.
The exact solution of the GEW equation is taken \cite{evans,sbgk6} to be

\begin{equation}
\mathit{U(x,t)}=\sqrt[p]{\frac{c(p+1)(p+2)}{2\varepsilon }\sec h^{2}[\frac{p%
}{2\sqrt{\mu }}(x-ct-x_{0})]}  \label{exct}
\end{equation}%
which corresponds to a solitary wave of amplitude $\sqrt[p]{\frac{c(p+1)(p+2)%
}{2\varepsilon },}$ the speed of the wave traveling in the positive
direction of the $x$-axis is $c$, width $\frac{p}{2\sqrt{\mu }}$ and $x_{0}$
is arbitrary constant. With the homogenous boundary conditions, solutions of
GEW equation possess three invariants of the motion introduced by
\begin{equation}
I_{1}=\int_{a}^{b}U(x,t)dx,~~~~~I_{2}=\int_{a}^{b}[{U^{2}(x,t)+\mu
U_{x}^{2}(x,t)}]dx,~~~~~I_{3}=\int_{a}^{b}{U^{p+2}(x,t)}dx  \label{invrnt}
\end{equation}%
related to mass, momentum and energy, respectively.

\subsection{Propagation of single solitary waves}

For the numerical study in this case, we firstly select $p=2$, $c=0.5$, $%
h=0.1$, $\Delta t$ $=0.2$, $\mu =1$, $\varepsilon =3$ and $x_{0}$ $=30$
through the interval $[0,80]$ to match up with that of previous papers \cite%
{sbgk,sbgk6,roshan}. These parameters represent the motion of a single
solitary wave with amplitude $1.0$ and the program is performed to time $%
t=20 $ over the solution interval. The analytical values of conservation
quantities are $I_{1}$ $=3.1415927$, $I_{2}$ $=2.6666667$ and $%
I_{3}=1.3333333.$ Values of the three invariants as well as $\mathit{L}_{2}$
and $\ \mathit{L}_{\infty }$-error norms from our method have been found and
noted in Table $(\ref{400})$. Referring to Table$(\ref{400}),$ the error
norms $\mathit{L}_{2}$ and $\mathit{L}_{\infty }$ remain less than $%
1.286582\times 10^{-2}$, $8.31346\times 10^{-3}$ and they are still small
when the time is increased up to $t=20$. The invariants $I_{1},I_{2}$, $%
I_{3} $ change from their initial values by less than $9.8\times \ 10^{-6},$
$3.2\times \ 10^{-5}$ and $1.3\times \ 10^{-5},$ respectively, throughout
the simulation. Also, this table confirms that the changes of the invariants
are in agreement with their exact values. So we conclude that our method is
sensibly conservative. Comparisons with our results with exact solution as
well as the calculated values in \cite{sbgk,sbgk6,roshan} have been made and
showed in Table$(\ref{401})$ at $t=20$. This table clearly shows that the
error norms got by our method are marginally less than the others. The
numerical solutions at different time levels are depicted in Fig. $(\ref{500}%
).$ This figure shows that single soliton travels to the right at a constant
speed and conserves its amplitude and shape with increasing time
unsurprisingly. Initially, the amplitude of solitary wave is $1.00000$ and
its top position is pinpionted at $x=30$. At $t=20,$ its amplitude is noted
as $0.999416$ with center $x=40$. Thereby the absolute difference in
amplitudes over the time interval $[0,20]$ are observed as $5.84\times
10^{-4}$. The quantile of error at discoint times are depicted in Fig.$(\ref%
{501})$ . The error aberration varies from $-8\times 10^{-2}$ to $1\times
10^{-2}$ and the maximum errors happen around the central position of the
solitary wave.
\begin{table}[h!]
\caption{Invariants and errors for single solitary wave with $p=2,$ $c=0.5,$
$h=0.1,$ $\protect\varepsilon =3,$ $\Delta t=0.2,$ $\protect\mu =1,$ $x\in %
\left[ 0,80\right] .$ }
\label{400}\vskip-1.cm
\par
\begin{center}
{\scriptsize
\begin{tabular}{cccccc}
\hline\hline
$Time$ & $I_{1}$ & $I_{2}$ & $I_{3}$ & $L_{2}$ & $L_{\infty }$ \\ \hline
0 & 3.1415863 & 2.6682242 & 1.3333283 & 0.00000000 & 0.00000000 \\
5 & 3.1415916 & 2.6682311 & 1.3333406 & 0.00395289 & 0.00294851 \\
10 & 3.1415934 & 2.6682352 & 1.3333413 & 0.00704492 & 0.00473785 \\
15 & 3.1415948 & 2.6682434 & 1.3333413 & 0.00995547 & 0.00651735 \\
20 & 3.1415961 & 2.6682568 & 1.3333413 & 0.01286582 & 0.00831346 \\
\hline\hline
\end{tabular}
}
\end{center}
\end{table}
\begin{table}[h!]
\caption{Comparisons of results for single solitary wave with $p=2,$ $c=0.5,$
$h=0.1,$ $\protect\varepsilon =3,$ $\Delta t=0.2,$ $\protect\mu =1,$ $x\in %
\left[ 0,80\right] $ at $t=20.$}
\label{401}\vskip-1.cm
\par
\begin{center}
{\scriptsize
\begin{tabular}{cccccc}
\hline\hline
$Method$ & $I_{1}$ & $I_{2}$ & $I_{3}$ & $L_{2}$ & $L_{\infty }$ \\ \hline
Analytic & 3.1415961 & 2.6666667 & 1.3333333 & 0.00000000 & 0.00000000 \\
Our Method & 3.1415916 & 2.6682568 & 1.3333413 & 0.01286582 & 0.00831346 \\
Cubic Galerkin\cite{sbgk} & 3.1589605 & 2.6902580 & 1.3570299 & 0.03803037 &
0.02629007 \\
Quintic Collocation First Scheme\cite{sbgk6} & 3.1250343 & 2.6445829 &
1.3113394 & 0.05132106 & 0.03416753 \\
Quintic Collocation Second Scheme\cite{sbgk6} & 3.1416722 & 2.6669051 &
1.3335718 & 0.01675092 & 0.01026391 \\
Petrov-Galerkin\cite{roshan} & 3.14159 & 2.66673 & 1.33341 & 0.0123326 &
0.0086082 \\ \hline\hline
\end{tabular}
}
\end{center}
\end{table}
{\normalsize
\begin{figure}[h!]
{\normalsize {\scriptsize {\centering{\small %
\includegraphics[scale=.25]{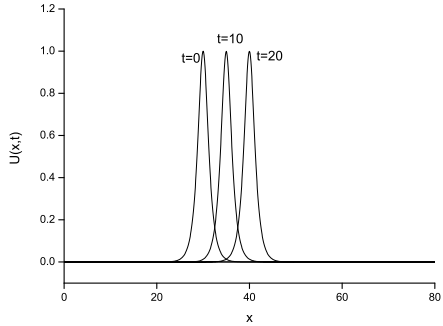}} } }  }
\caption{Motion of single solitary wave for $p=2$, $c=0.5$, $h=0.1$, $\Delta
t$ $=0.2,$ $\protect\varepsilon =3,$ $\protect\mu =1,$ over the interval $%
[0,80]$ at $t=0,10,20.$}
\label{500}
\end{figure}
\begin{figure}[h!]
{\normalsize {\scriptsize {\centering{\small %
\includegraphics[scale=.25]{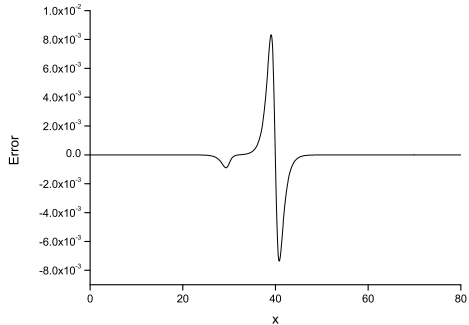}} } }  }
\caption{Error graph for $p=2,$ $c=0.5,$ $h=0.1,$ $\protect\varepsilon =3,$ $%
\Delta t=0.2,$ $\protect\mu =1,$ $x\in \left[ 0,80\right] $ at $t=20$.}
\label{501}
\end{figure}
}

For our second experiment, we take the parameters $p=3,$ $c=0.3,$ $h=0.1,$ $%
\Delta t=0.2,$ $\varepsilon =3,$ $\mu =1$, $x_{0}$ $=30$ with interval $%
[0,80]$ to coincide with that of previous papers \cite{sbgk,sbgk6,roshan}.
Thus the solitary wave has amplitude $1.0$ and the computations are carried
out for times up to $t=20.$ The values of the error norms $L_{2},$ $%
L_{\infty }$ and conservation quantities $I_{1},I_{2}$,$I_{3}$ are found and
tabulated in Table $(\ref{402})$. According to Table$(\ref{402})$ the error
norms $\mathit{L}_{2}$ and $\mathit{L}_{\infty }$ remain less than $%
4.48357\times 10^{-3}$, $3.37609\times 10^{-3}$ and they are still small
when the time is increased up to $t=20$ and the invariants $I_{1},I_{2}$,$%
I_{3}$ change from their initial values by less than $1.78\times \ 10^{-5},$
$2.52\times \ 10^{-5}$, $3.55\times \ 10^{-5},$ respectively. Therefore we
can say our method is satisfactorily conservative. In Table$(\ref{403})$ the
performance of the our new method is compared with other methods \cite%
{sbgk,sbgk6,roshan} at $t=20$. It is observed that errors of the method \cite%
{sbgk,sbgk6,roshan} are considerably larger than those obtained with the
present scheme. The motion of solitary wave using our scheme is graphed at
time $t=0,10,20$ in Fig.$(\ref{502}).$ As seen, single solitons move to the
right at a constant speed and preserves its amplitude and shape with
increasing time as anticipated. The amplitude is $1.00000$ at $t=0$ and
located at $x=30$, while it is $0.999522$ at $t=20$ and located at $x=36$.
Therefore the absolute difference in amplitudes over the time interval $%
[0,20]$ are found as $4.78\times 10^{-4}$. The aberration of error at
discrete times are drawn in Fig.$(\ref{503}).$ The error deviation varies
from $-3\times 10^{-3}$ to $4\times 10^{-3}$ and the maximum errors arise
around the central position of the solitary wave.
\begin{table}[h!]
\caption{Invariants and errors for single solitary wave with $p=3,$ $c=0.3,$
$h=0.1,$ $\Delta t=0.2,$ $\protect\varepsilon =3,$ $\protect\mu =1,$ $x\in %
\left[ 0,80\right] .$ }
\label{402}\vskip-1.cm
\par
\begin{center}
{\scriptsize
\begin{tabular}{cccccc}
\hline\hline
$Time$ & $I_{1}$ & $I_{2}$ & $I_{3}$ & $L_{2}$ & $L_{\infty }$ \\ \hline
0 & 2.8043580 & 2.4664883 & 0.9855618 & 0.00000000 & 0.00000000 \\
5 & 2.8043723 & 2.4665080 & 0.9855942 & 0.00183258 & 0.00177948 \\
10 & 2.8043747 & 2.4665108 & 0.9855973 & 0.00291958 & 0.00233283 \\
15 & 2.8043753 & 2.4665119 & 0.9855973 & 0.00372417 & 0.00285444 \\
20 & 2.8043758 & 2.4665135 & 0.9855973 & 0.00448357 & 0.00337609 \\
\hline\hline
\end{tabular}
}
\end{center}
\end{table}
\begin{table}[h!]
\caption{Comparisons of results for single solitary wave with $p=3,$ $c=0.3,$
$h=0.1,$ $\Delta t=0.2,$ $\protect\varepsilon =3,$ $\protect\mu =1,$ $x\in %
\left[ 0,80\right] $ at $t=20.$}
\label{403}\vskip-1.cm
\par
\begin{center}
{\scriptsize
\begin{tabular}{cccccc}
\hline\hline
$Method$ & $I_{1}$ & $I_{2}$ & $I_{3}$ & $L_{2}$ & $L_{\infty }$ \\ \hline
Our Method & 2.8043758 & 2.4665135 & 0..9855973 & 0.00448357 & 0.00337609 \\
Cubic Galerkin\cite{sbgk} & 2.8187398 & 2.4852249 & 1.0070200 & 0.01655637 &
0.01370453 \\
Quintic Collocation First Scheme\cite{sbgk6} & 2.8043570 & 2.4639086 &
0.9855602 & 0.00801470 & 0.00538237 \\
Quintic Collocation Second Scheme\cite{sbgk6} & 2.8042943 & 2.4637495 &
0.9854011 & 0.00708553 & 0.00480470 \\
Petrov-Galerkin\cite{roshan} & 2.80436 & 2.46389 & 0.98556 & 0.00484271 &
0.00370926 \\ \hline\hline
\end{tabular}
}
\end{center}
\end{table}
{\normalsize
\begin{figure}[h!]
{\normalsize {\scriptsize {\centering{\small %
\includegraphics[scale=.25]{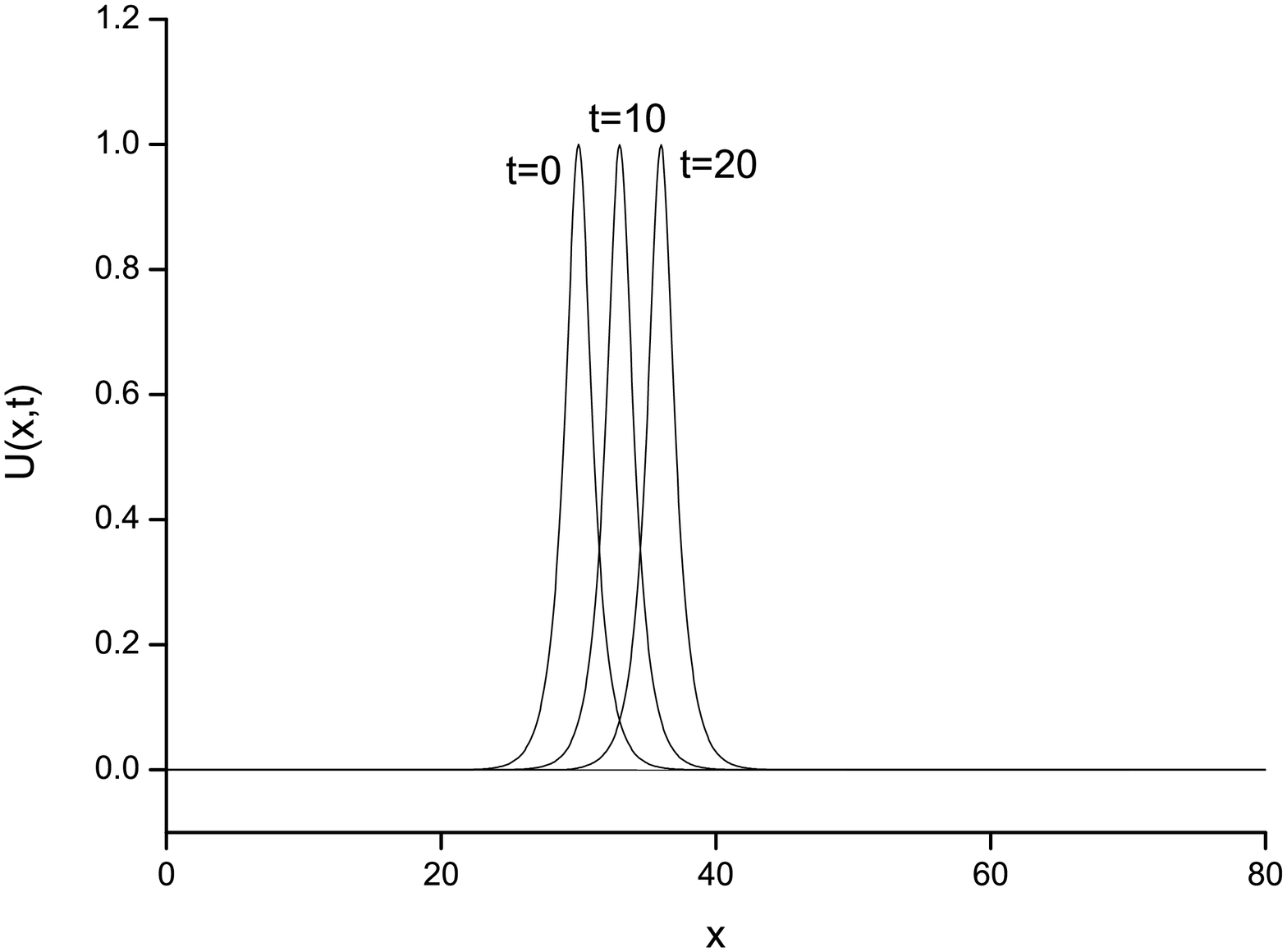}} } }  }
\caption{Motion of single solitary wave for $p=3,$ $c=0.3,$ $h=0.1,$ $\Delta
t=0.2,$ $\protect\varepsilon =3,$ $\protect\mu =1,$ $x\in \left[ 0,80\right]
$ at $t=0,10,20.$}
\label{502}
\end{figure}
\begin{figure}[h!]
{\normalsize {\scriptsize {\centering{\small %
\includegraphics[scale=.25]{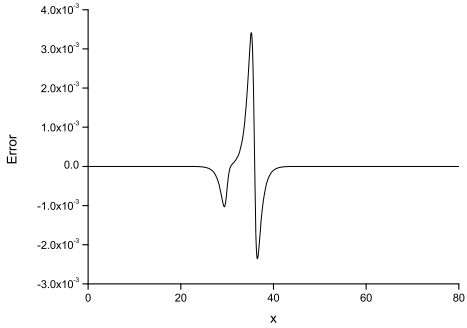}} } }  }
\caption{Error graph for $p=3,$ $c=0.3,$ $h=0.1,$ $\Delta t=0.2,$ $\protect%
\varepsilon =3,$ $\protect\mu =1,$ $x\in \left[ 0,80\right] $ at $t=20$.}
\label{503}
\end{figure}
\ \ \ \ }

For our final treatment, we put the parameters $p=4,$ $c=0.2,$ $h=0.1,$ $%
\Delta t=0.2,$ $\varepsilon =3,$ $\mu =1$, $\ x_{0}$ $=30$ over the interval
$[0,80]$ to make possible comparisons with those of earlier papers \cite%
{sbgk,sbgk6,roshan}. So solitary wave has amplitude $1.0$ and the
simulations are executed to time $t=20$ to invent the error norms $L_{2}$
and $L_{\infty }$ and the numerical invariants $I_{1},I_{2}$ and $I_{3}.$
For these values of the parameters, the conservation properties and the $%
L_{2}$-error as well as the $L_{\infty }$-error norms have been listed in
Table$(\ref{4040})$ for several values of the time level $t$. It can be
referred from Table$(\ref{4040}),$ the error norms $\mathit{L}_{2}$ and $%
\mathit{L}_{\infty }$ remain less than $1.96046\times 10^{-3}$, $%
1.33416\times 10^{-3}$ and they are still small when the time is increased
up to $t=20$ and the invariants $I_{1},I_{2}$, $I_{3}$ change from their
initial values by less than $4.07\times \ 10^{-5},$ $5.80\times \ 10^{-5}$
and $6.32\times \ 10^{-5},$ respectively, throughout the simulation. Hence
we can say our method is sensibly conservative. The comparison between the
results obtained by the current method with those in the other papers \cite%
{sbgk,sbgk6,roshan} is also documented in Table$(\ref{4050})$. It is
noticeably seen from the table that errors of the current method are
radically less than those obtained with the earlier methods \cite%
{sbgk,sbgk6,roshan}. For visual representation, the simulations of single
soliton for values $p=4,c=0.2,h=0.1,\Delta t=0.2$ at times $t=0,10$ and $20$
are illustrated in Figure$(\ref{504})$. It is understood from this figure
that the numerical scheme performs the motion of propagation of a single
solitary wave, which moves to the right at nearly unchanged speed and
conserves its amplitude and shape with increasing time. The amplitude is $%
1.00000$ at $t=0$ and located at $x=30$, while it is $0.999475$ at $t=20$
and located at $x=34$. The absolute difference in amplitudes at times $t=0$
and $t=10$ is $5.25\times 10^{-4}$ so that there is a little change between
amplitudes. Error distributions at time $t=20$ are shown graphically in
Figure$(\ref{505})$. As it is seen, the maximum errors are between $%
-1.5\times 10^{-3}$ to $1.5\times 10^{-3}$ and occur around the central
position of the solitary wave.
\begin{table}[h!]
\caption{Invariants and errors for single solitary wave with $p=4,$ $c=0.2,$
$h=0.1,$ $\Delta t=0.2,$ $\protect\varepsilon =3,$ $\protect\mu =1,$ $x\in %
\left[ 0,80\right] .$ }
\label{4040}\vskip-1.cm
\par
\begin{center}
{\scriptsize
\begin{tabular}{cccccc}
\hline\hline
$Time$ & $I_{1}$ & $I_{2}$ & $I_{3}$ & $L_{2}$ & $L_{\infty }$ \\ \hline
0 & 2.6220516 & 2.3598323 & 0.7853952 & 0.00000000 & 0.00000000 \\
5 & 2.6220846 & 2.3598808 & 0.7854675 & 0.00125061 & 0.00141788 \\
10 & 2.6220915 & 2.3598891 & 0.7854783 & 0.00178634 & 0.00147002 \\
15 & 2.6220920 & 2.3598898 & 0.7854785 & 0.00193428 & 0.00139936 \\
20 & 2.6220923 & 2.3598903 & 0.7854785 & 0.00196046 & 0.00133416 \\
\hline\hline
\end{tabular}
}
\end{center}
\end{table}
\begin{table}[h!]
\caption{Comparisons of results for single solitary wave with $p=4,$ $c=0.2,$
$h=0.1,$ $\Delta t=0.2,$ $\protect\varepsilon =3,$ $\protect\mu =1,$ $x\in %
\left[ 0,100\right] $ at $t=20.$ }
\label{4050}\vskip-1.cm
\par
\begin{center}
{\scriptsize
\begin{tabular}{cccccc}
\hline\hline
$Method$ & $I_{1}$ & $I_{2}$ & $I_{3}$ & $L_{2}$ & $L_{\infty }$ \\ \hline
Our Method & 2.6220923 & 2.3598903 & 0.7854785 & 0.00196046 & 0.00133416 \\
Cubic Galerkin\cite{sbgk} & 2.6327833 & 2.3730032 & 0.8023383 & 0.00890617 &
0.00821991 \\
Quintic Collocation First Scheme\cite{sbgk6} & 2.6220508 & 2.3561901 &
0.7853939 & 0.00421697 & 0.00297952 \\
Quintic Collocation First Scheme\cite{sbgk6} & 2.6219284 & 2.3559327 &
0.7851364 & 0.00339086 & 0.00247031 \\
Petrov-Galerkin\cite{roshan} & 2.62206 & 2.35615 & 0.78534 & 0.00230499 &
0.00188285 \\ \hline\hline
\end{tabular}
}
\end{center}
\end{table}
{\normalsize
\begin{figure}[h!]
{\normalsize {\scriptsize {\centering{\small %
\includegraphics[scale=.25]{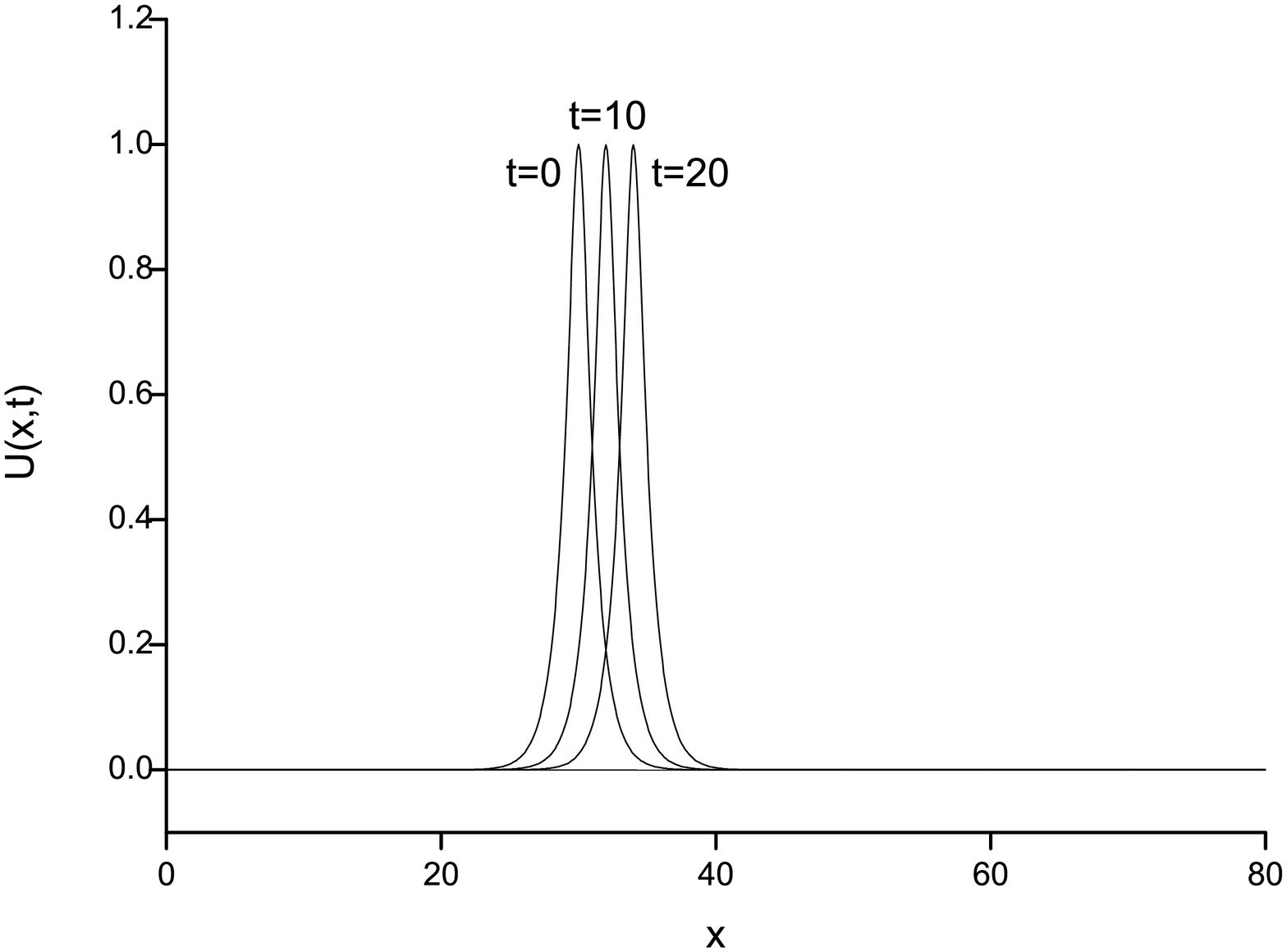}} } }  }
\caption{Motion of single solitary wave for $p=4,$ $c=0.2,$ $h=0.1,$ $\Delta
t=0.2,$ $\protect\varepsilon =3,$ $\protect\mu =1,x\in $ $[0,80]$ at $%
t=0,10,20.$}
\label{504}
\end{figure}
\begin{figure}[h!]
{\normalsize {\scriptsize {\centering{\small %
\includegraphics[scale=.25]{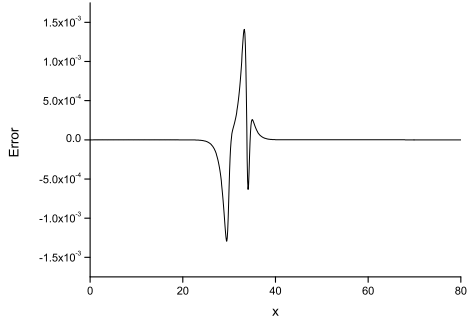}} } }  }
\caption{Error graph for $p=4,$ $c=0.2,$ $h=0.1,$ $\Delta t=0.2,$ $\protect%
\varepsilon =3,$ $\protect\mu =1$ at $t=20$.}
\label{505}
\end{figure}
}

\subsection{Interaction of two solitary waves}

Our second test problem pertains to the interaction of two solitary wave
solutions of GEW equation having different amplitudes and traveling in the
same direction. We tackle GEW equation with initial conditions given by the
linear sum of two well separated solitary waves of various amplitudes as
follows

\begin{equation}
U(x,0)=\sum_{j=1}^{2}\sqrt[p]{\frac{c_{j}(p+1)(p+2)}{2\varepsilon }\sec
h^{2}[\frac{p}{2\sqrt{\mu }}(x-x_{j})]},  \label{ini. for two solit}
\end{equation}%
where $c_{j}$ and $x_{j}$, $\ j=1,2$ are arbitrary constants. For the
computational work, two sets of parameters are considered by taking
different values of $p,c_{i}$ and the same values of $h=0.1$, $\Delta
t=0.025,$ $\varepsilon =3,$ $\mu =1$ over the interval $0\leq x\leq 80.$ We
firstly take $p=3,$ $c_{1}=0.3,$ $c_{2}=0.0375.$ So the amplitudes of the
two solitary waves are in the ratio $2:1.$ Calculations are done up to $%
t=100 $. The three invariants in this case are tabulated in Table$(\ref{4051}%
)$ . It is clear that the quantities are satisfactorily constant and very
closed with the methods \cite{sbgk,sbgk6,roshan} during the computer run.
Fig. $(\ref{40510})$ illustrates the behavior of the interaction of two
positive solitary waves. At $t=100$, the magnitude of the smaller wave is $%
0.510619$ on reaching position $x=31.8$, and of the larger wave $0.999364$
having the position $x=46.7$, so that the difference in amplitudes is $%
0.010619$ for the smaller wave and $0.000636$ for the larger wave. For the
second case, we have studied the interaction of two solitary waves with the
parameters \ $p=4,c_{1}=0.2,$ $c_{2}=1/80.$ So the amplitudes of the two
solitary waves are in the ratio $2:1$. \ For this case the experiment is run
until time $t=120$. The three invariants in this case are recorded in Table$(%
\ref{40530}).$ The results in this table indicate that the numerical values
of the invariants are good agreement with those of methods\cite%
{sbgk,sbgk6,roshan} during the computer run. Fig.$(\ref{4052})$ shows the
development of the solitary wave interaction$.$
\begin{table}[h!]
\caption{Invariants for interaction of two solitary waves with $p=3.$ }
\label{4051}\vskip-1.cm
\par
\begin{center}
{\scriptsize
\begin{tabular}{ccccccc}
\hline\hline
&  &  &  &  &  &  \\ \hline
& $t$ & $0$ & $30$ & $60$ & $90$ & $100$ \\ \hline
& Our Method & 4.20653 & 4.20657 & 4.20622 & 4.20502 & 4.20517 \\
$I_{1}$ & \cite{sbgk} & 4.20653 & 4.20653 & 4.20616 & 4.20490 & 4.20503 \\
& \cite{sbgk6} First & 4.20653 & 4.20653 & 4.20653 & 4.20653 & 4.20653 \\
& \cite{sbgk6} Second & 4.20653 & 4.20653 & 4.20653 & 4.20653 & 4.20653 \\
& \cite{roshan} & 4.20655 & 4.20655 & 4.20655 & 4.20655 & 4.20655 \\
& Our Method & 3.08311 & 3.08318 & 3.08309 & 3.08220 & 3.08251 \\
$I_{2}$ & \cite{sbgk} & 3.07987 & 3.07991 & 3.07947 & 3.07777 & 3.07797 \\
& \cite{sbgk6} First & 3.07988 & 3.07988 & 3.07988 & 3.07988 & 3.07988 \\
& \cite{sbgk6} Second & 3.07988 & 3.07988 & 3.07988 & 3.07988 & 3.07988 \\
& \cite{roshan} & 3.97977 & 3.07980 & 3.07987 & 3.07974 & 3.07972 \\
& Our Method & 1.01636 & 1.01644 & 1.01664 & 1.01632 & 1.01634 \\
$I_{3}$ & \cite{sbgk} & 1.01636 & 1.01638 & 1.01654 & 1.01616 & 1.01616 \\
& \cite{sbgk6} First & 1.01636 & 1.01636 & 1.01636 & 1.01636 & 1.01636 \\
& \cite{sbgk6} Second & 1.01636 & 1.01636 & 1.01636 & 1.01636 & 1.01636 \\
& \cite{roshan} & 1.01634 & 1.01634 & 1.01634 & 1.01633 & 1.01634 \\
\hline\hline
\end{tabular}
}
\end{center}
\end{table}
\begin{table}[h!]
\caption{Invariants for interaction of two solitary waves with $p=4.$ }
\label{40530}\vskip-1.cm
\par
\begin{center}
{\scriptsize
\begin{tabular}{ccccccc}
\hline\hline
&  &  &  &  &  &  \\ \hline
& $t$ & $0$ & $30$ & $60$ & $90$ & 120 \\ \hline
& Our Method & 3.93307 & 3.93311 & 3.93393 & 3.93229 & 3.93037 \\
$I_{1}$ & \cite{sbgk} & 3.93307 & 3.93309 & 3.93388 & 3.93222 & 3.93026 \\
& \cite{sbgk6} First & 3.93307 & 3.93307 & 3.93307 & 3.93307 & 3.93307 \\
& \cite{sbgk6} Second & 3.93307 & 3.93307 & 3.93307 & 3.93307 & 3.93307 \\
& \cite{roshan} & 3.93309 & 3.93309 & 3.93309 & 3.93309 & 3.93308 \\
& Our Method & 2.94979 & 2.94985 & 2.95122 & 2.94939 & 2.94801 \\
$I_{2}$ & \cite{sbgk} & 2.94521 & 2.94527 & 2.94703 & 2.94436 & 2.94212 \\
& \cite{sbgk6} First & 2.94524 & 2.94524 & 2.94524 & 2.94524 & 2.94524 \\
& \cite{sbgk6} Second & 2.94524 & 2.94523 & 2.94523 & 2.94523 & 2.94523 \\
& \cite{roshan} & 2.94512 & 2.94510 & 2.94505 & 2.94520 & 2.94511 \\
& Our Method & 0.79766 & 0.79775 & 0.79952 & 0.79824 & 0.79811 \\
$I_{3}$ & \cite{sbgk} & 0.79766 & 0.79770 & 0.79942 & 0.79812 & 0.79794 \\
& \cite{sbgk6} First & 0.79766 & 0.79766 & 0.79766 & 0.79766 & 0.79766 \\
& \cite{sbgk6} Second & 0.79766 & 0.79766 & 0.79766 & 0.79766 & 0.79766 \\
& \cite{roshan} & 0.79761 & 0.79761 & 0.79762 & 0.79761 & 0.79761 \\
\hline\hline
\end{tabular}
}
\end{center}
\end{table}
\begin{figure}[h!]
\centering{\small \includegraphics[scale=.20]{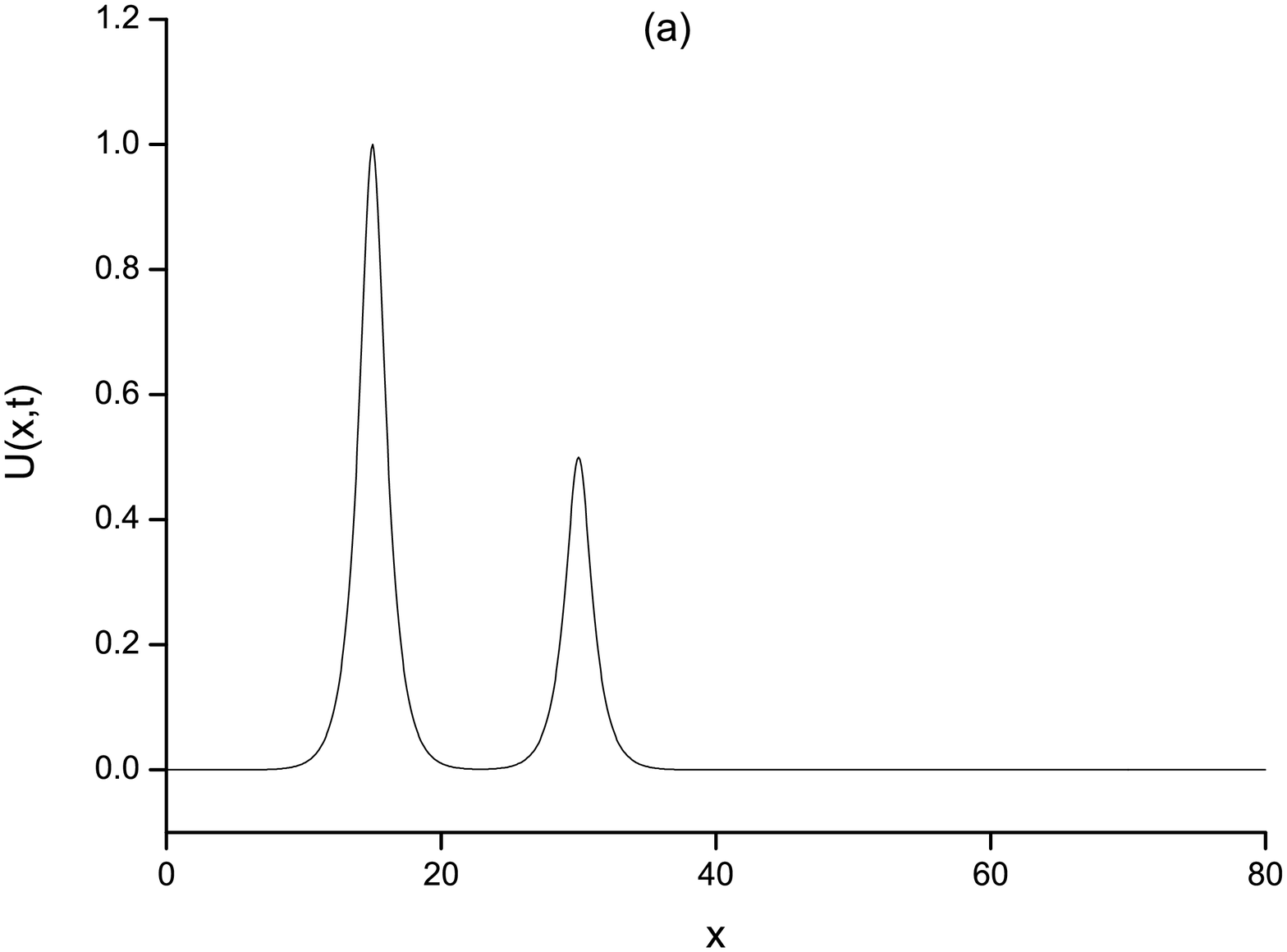} %
\includegraphics[scale=.20]{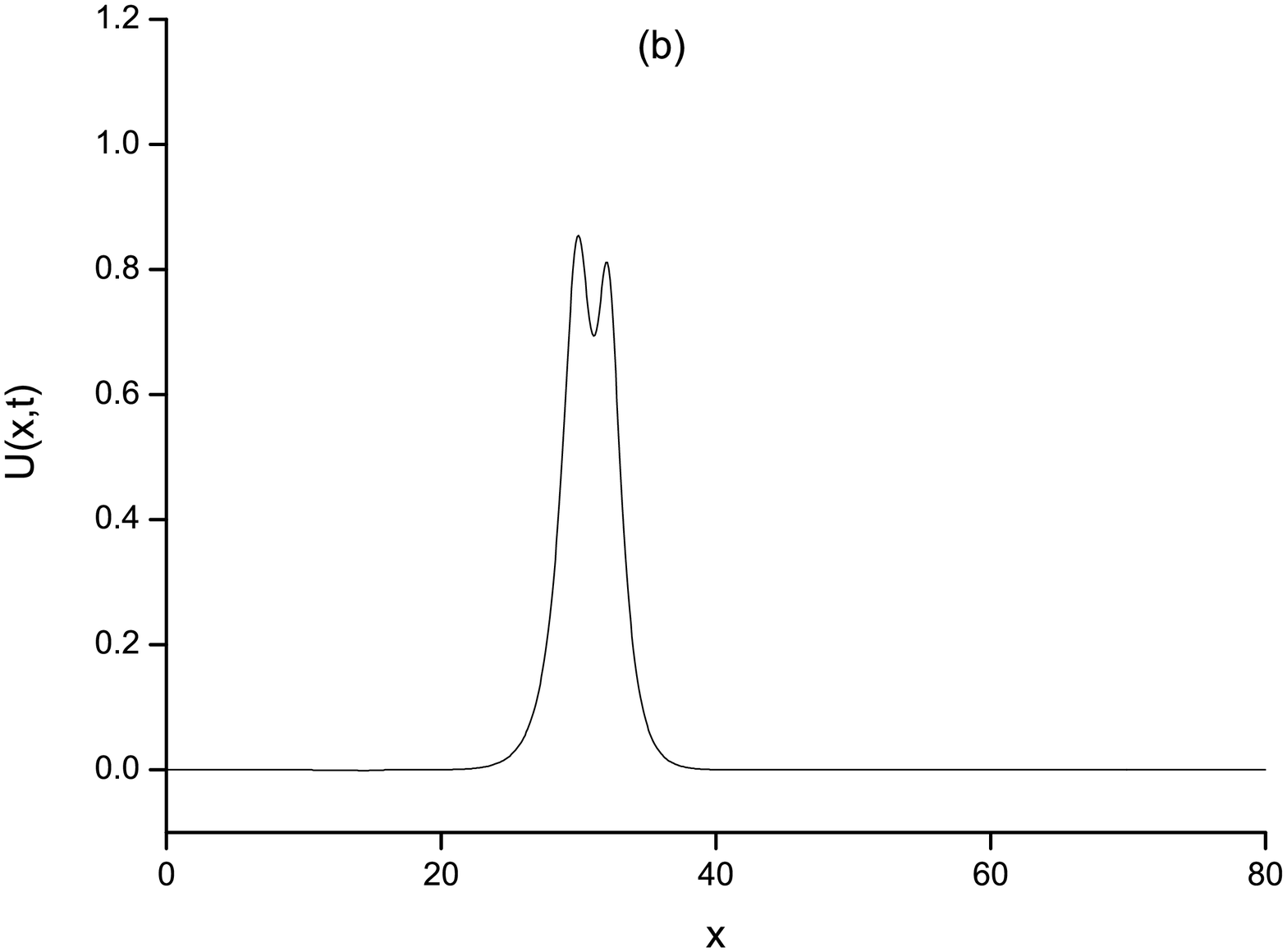}}
\par
{\small \includegraphics[scale=.20]{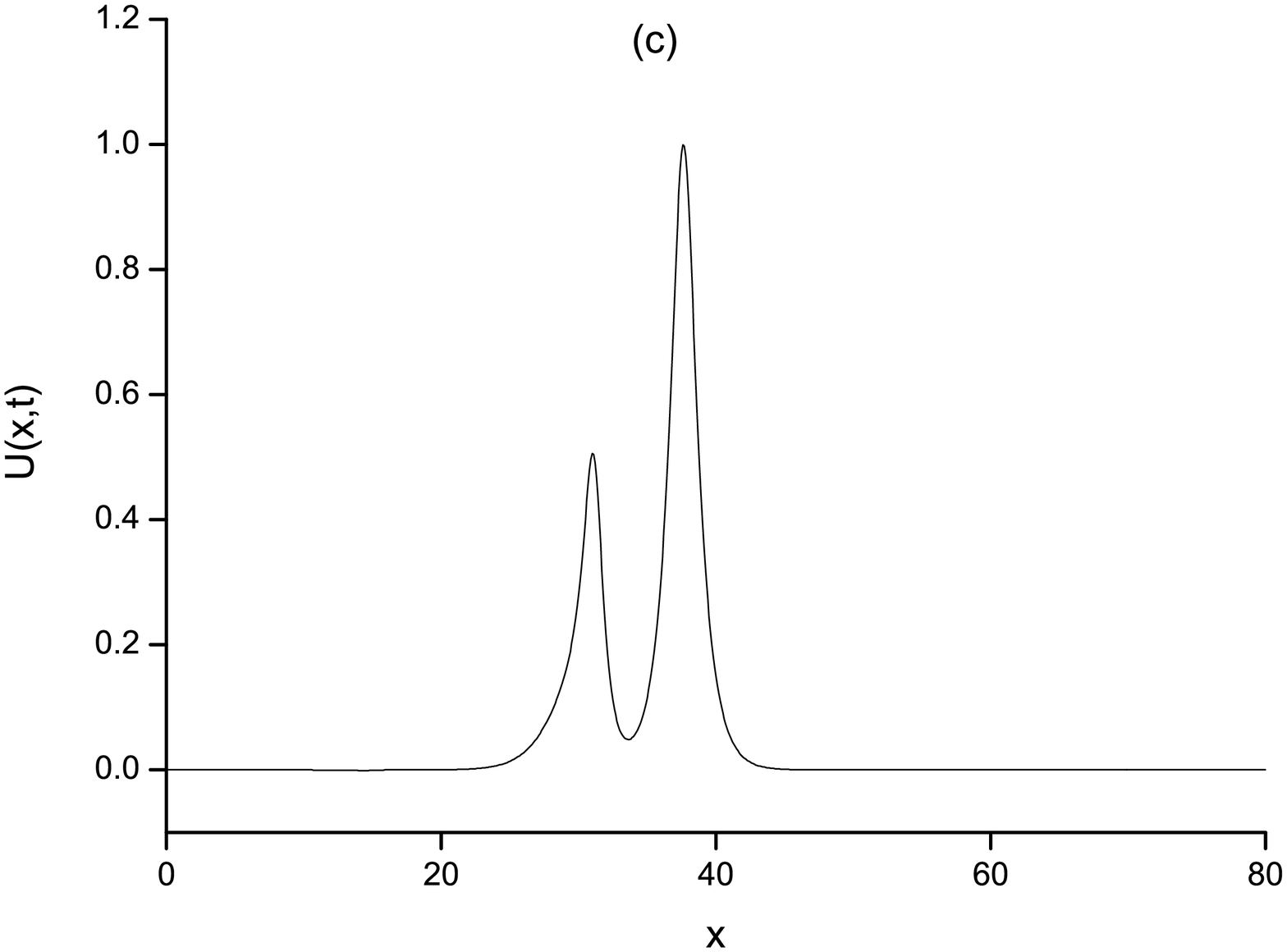} %
\includegraphics[scale=.20]{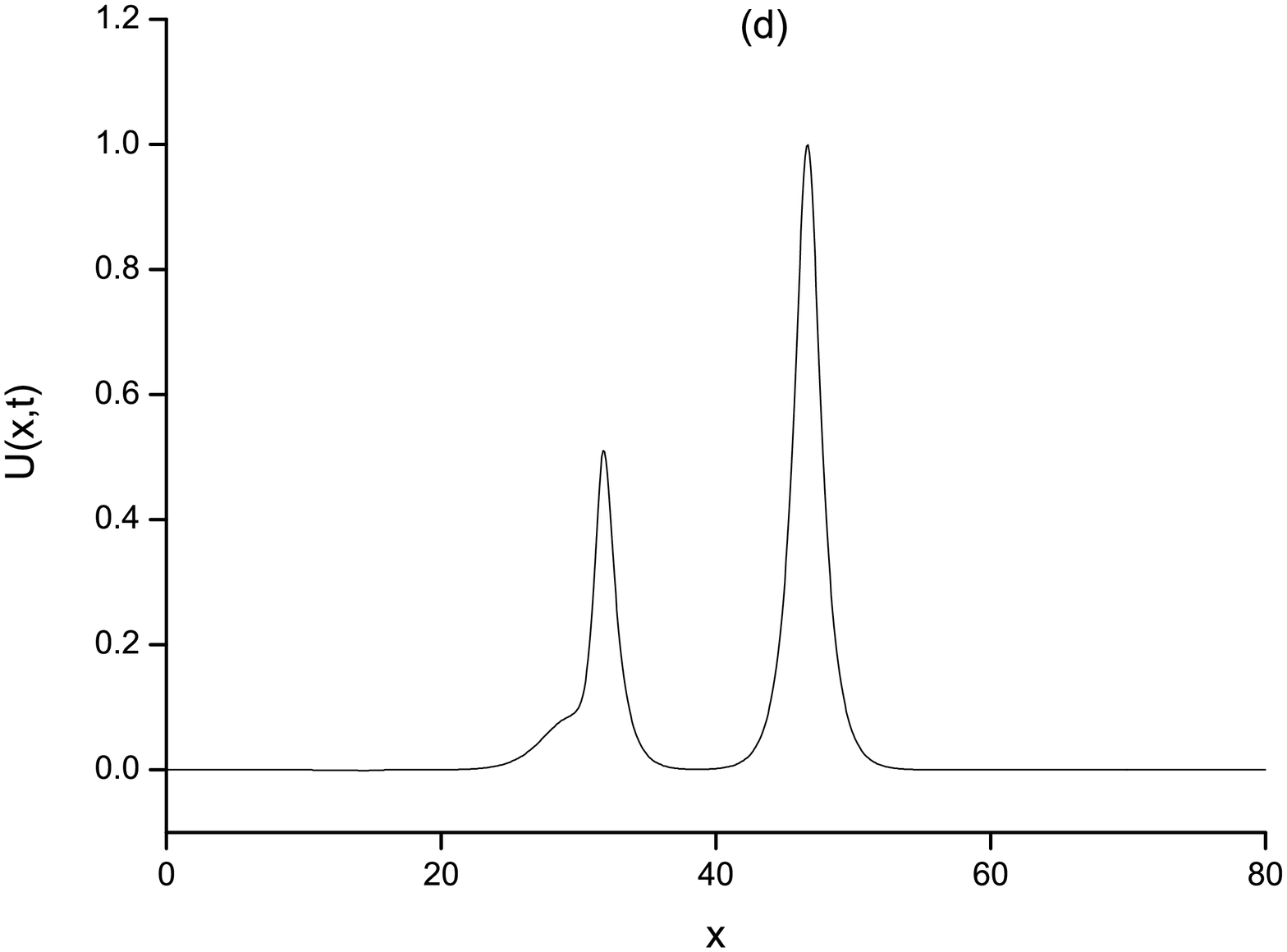}}
\caption{Interaction of two solitary waves at $p=3;$ $(a)t=0,$ $(b)t=50,$ $%
(c)t=70,$ $(d)t=100.$}
\label{40510}
\end{figure}
\begin{figure}[h!]
\centering{\small \includegraphics[scale=.20]{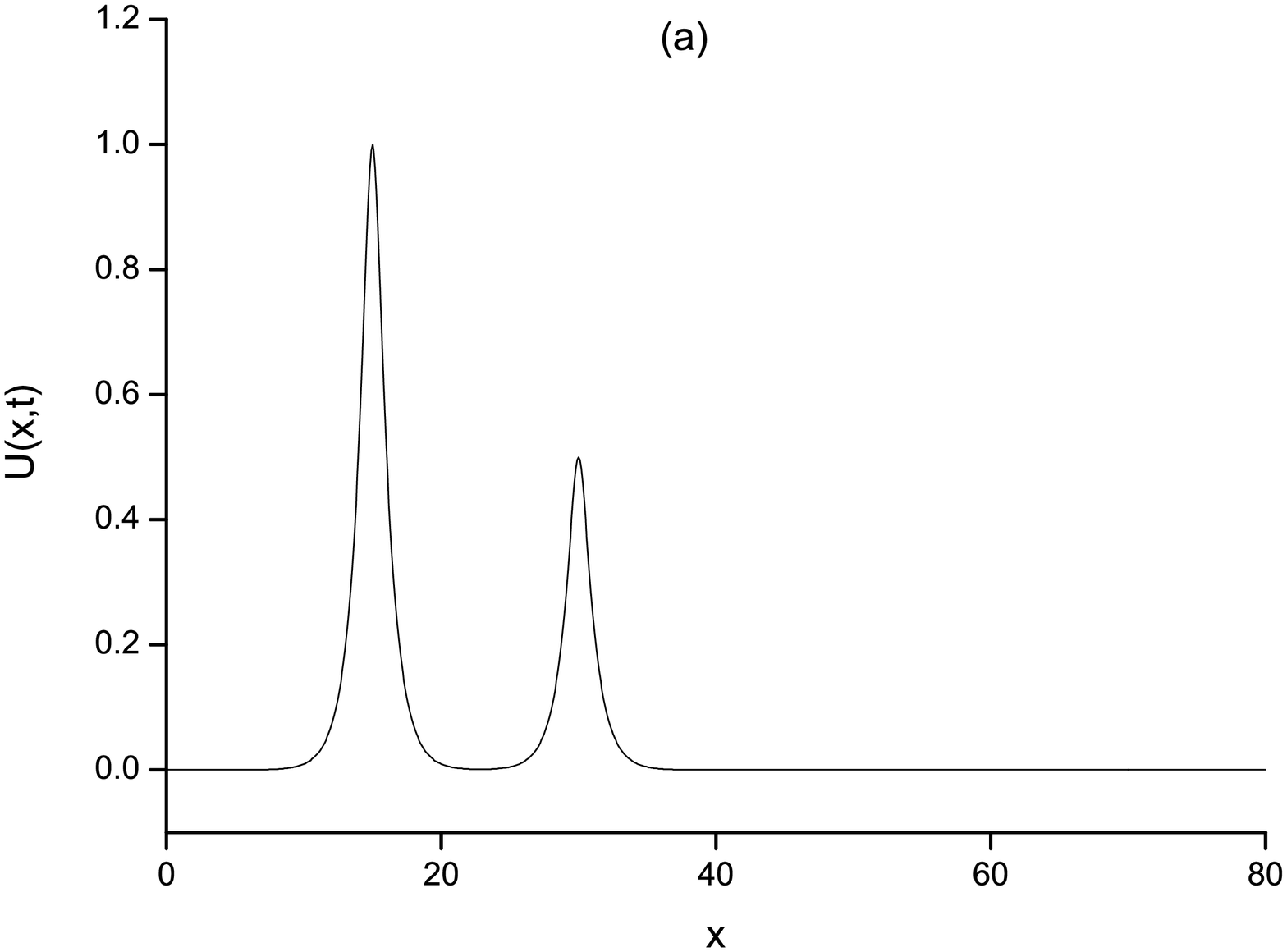} %
\includegraphics[scale=.20]{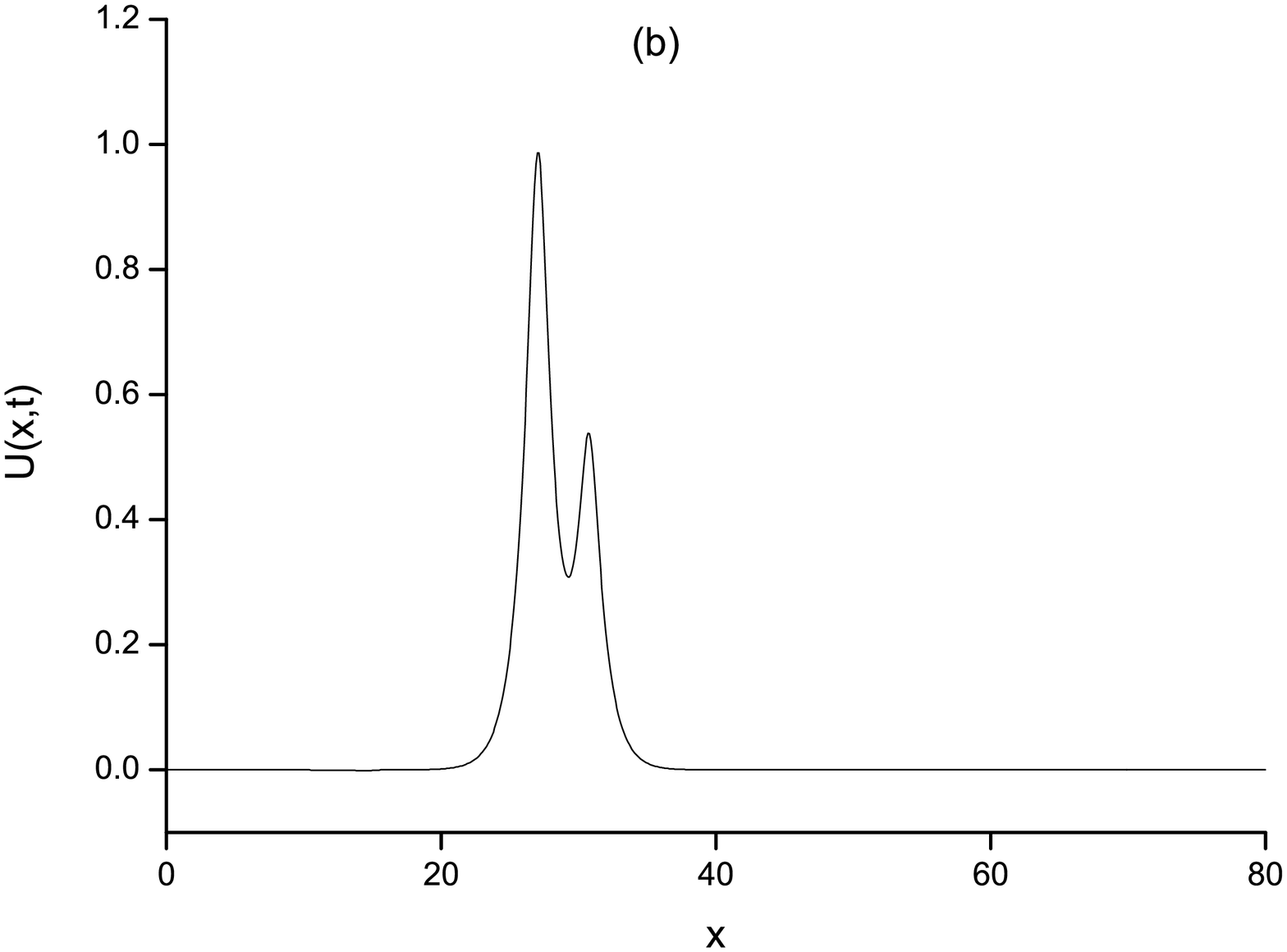}}
\par
{\small \includegraphics[scale=.20]{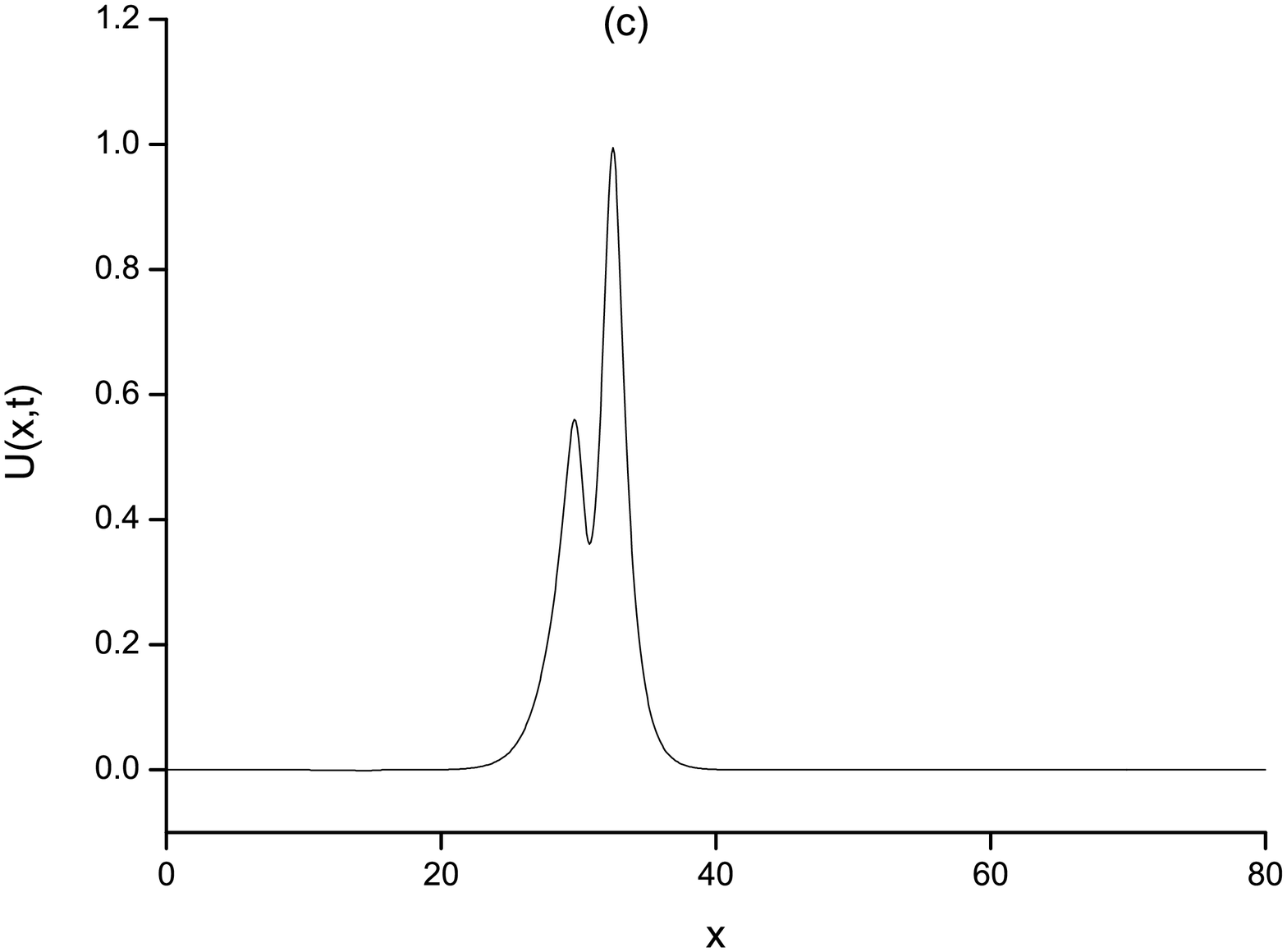} %
\includegraphics[scale=.20]{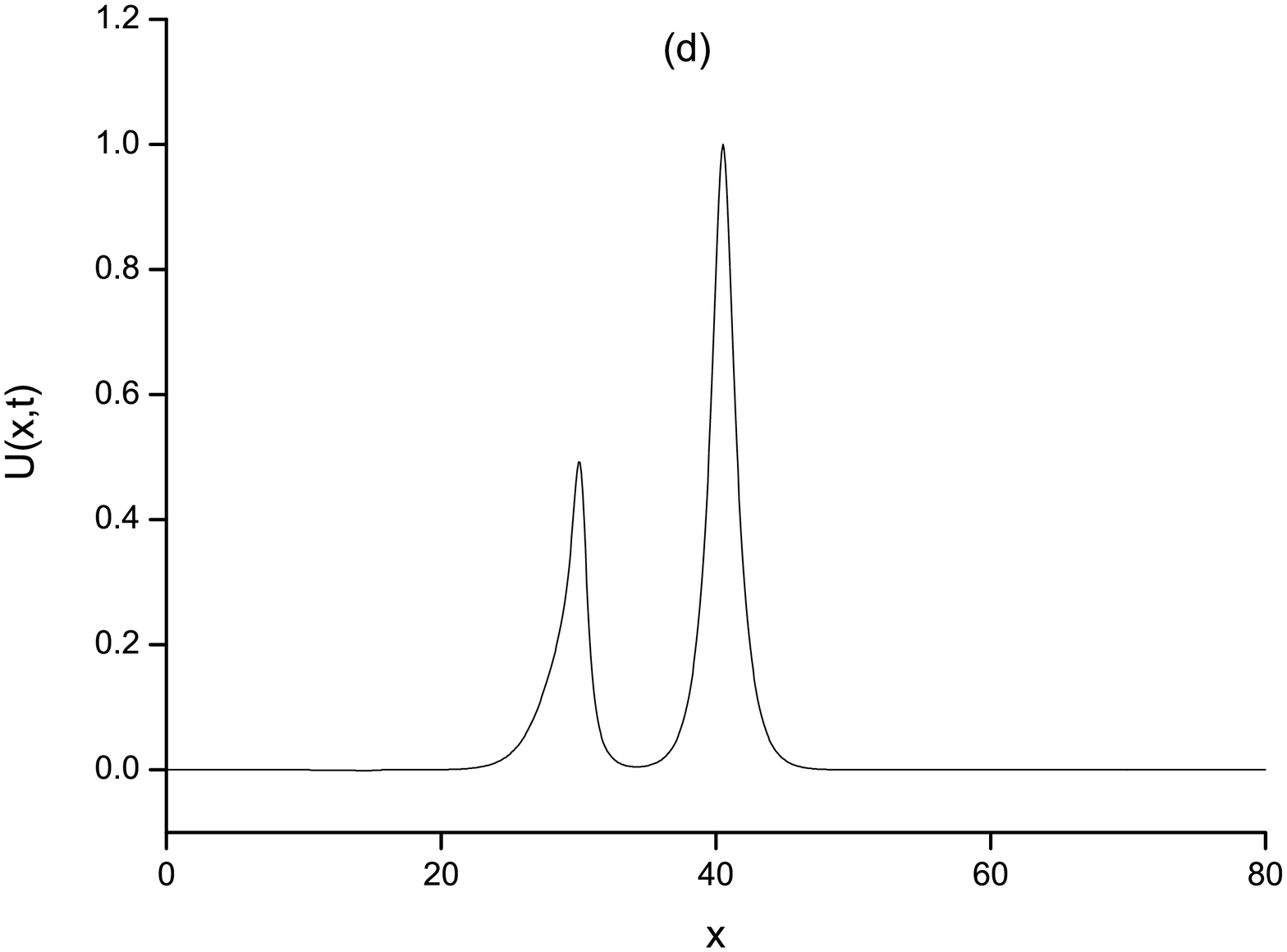}}
\caption{Interaction of two solitary waves at $p=4;$ $(a)t=0,$ $(b)t=60,$ $%
(c)t=80,$ $(d)t=120.$}
\label{4052}
\end{figure}

\subsection{Evolution of solitons}

Finally, another attracting initial value problem for the GEW equation is
evolution of the solitons that is used as the Gaussian initial condition in
solitary waves given by

\begin{equation}
U(x,0)=\exp (-x^{2}).
\end{equation}%
Since the behavior of the solution depends on values of $%
\mu
$, we choose different values of $%
\mu
=0.1$ and $%
\mu
=0.05$ for $p=2,3,4$. The numerical computations are done up to $t=12$.
Calculated numerical invariants at different values of $t$ are documented in
Table$(\ref{4053}).$ From this table, we can easily see that as the value of
$\mu $ increases, the variations of the invariants become smaller and it is
seen that calculated invariant values are satisfactorily constant. The
development of the evolution of solitons is presented in Figs.$(\ref{4055}),(%
\ref{4056})$ and $(\ref{4057})$. It is clearly seen in these figures that
when the value of $\mu $ decreases, the number of the stable solitary wave
increases.
\begin{table}[h!]
\caption{Maxwellian initial condition for different values of $\protect\mu .$%
}
\label{4053}\vskip-1.cm
\par
\begin{center}
{\scriptsize
\begin{tabular}{ccccccccccc}
\hline\hline
$\mu $ & $t$ & \multicolumn{3}{c}{p=2} & \multicolumn{3}{c}{p=3} &
\multicolumn{3}{c}{p=4} \\ \hline
&  & $I_{1}$ & $I_{2}$ & $I_{3}$ & $I_{1}$ & $I_{2}$ & $I_{3}$ & $I_{1}$ & $%
I_{2}$ & $I_{3}$ \\ \hline
& 0 & 1.7724537 & 1.3792767 & 0.8862269 & 1.7724537 & 1.3792767 & 0.7926655
& 1.7724537 & 1.3792767 & 0.7236013 \\
0.1 & 4 & 1.7724537 & 1.5760586 & 0.8862269 & 1.7724537 & 1.6168691 &
0.7926655 & 1.7724537 & 1.6360543 & 0.7236013 \\
& 8 & 1.7724537 & 1.5838481 & 0.8862269 & 1.7724537 & 1.6245008 & 0.7926655
& 1.7724537 & 1.6481131 & 0.7236013 \\
& 12 & 1.7724537 & 1.5920722 & 0.8862269 & 1.7724537 & 1.6325922 & 0.7926655
& 1.7724537 & 1.6531844 & 0.7236013 \\
\cite{sbgk6} & 12 & 1.7724 & 1.3786 & 0.8862 & 1.7724 & 1.3786 & 0.7928 &
1.7725 & 1.3786 & 0.7243 \\
\cite{roshan} & 12 & 1.7724 & 1.3785 & 0.8861 & 1.7724 & 1.3787 & 0.7926 &
1.7734 & 1.3836 & 0.7224 \\
& 0 & 1.7724537 & 1.3162954 & 0.8862269 & 1.7724537 & 1.3162954 & 0.7926655
& 1.7724537 & 1.3162954 & 0.7236013 \\
& 4 & 1.7724537 & 1.5406812 & 0.8862269 & 1.7724537 & 1.5766908 & 0.7926655
& 1.7724537 & 1.6243519 & 0.7236013 \\
0.05 & 8 & 1.7724537 & 1.6342604 & 0.8862269 & 1.7724537 & 1.6367952 &
0.7926655 & 1.7724537 & 1.6554614 & 0.7236013 \\
& 12 & 1.7724537 & 1.6835979 & 0.8862269 & 1.7724537 & 1.6372439 & 0.7926655
& 1.7724537 & 1.7079133 & 0.7236013 \\
\cite{sbgk6} & 12 & 1.7724 & 1.3159 & 0.8864 & 1.7725 & 1.3160 & 0.7940 &
1.7735 & 1.3188 & 0.7345 \\
\cite{roshan} & 12 & 1.7724 & 1.3160 & 0.8861 & 1.7724 & 1.3156 & 0.7922 &
1.7724 & 1.3177 & 0.7245 \\ \hline\hline
\end{tabular}
}
\end{center}
\end{table}
\begin{figure}[h!]
\centering{\small \includegraphics[scale=.20]{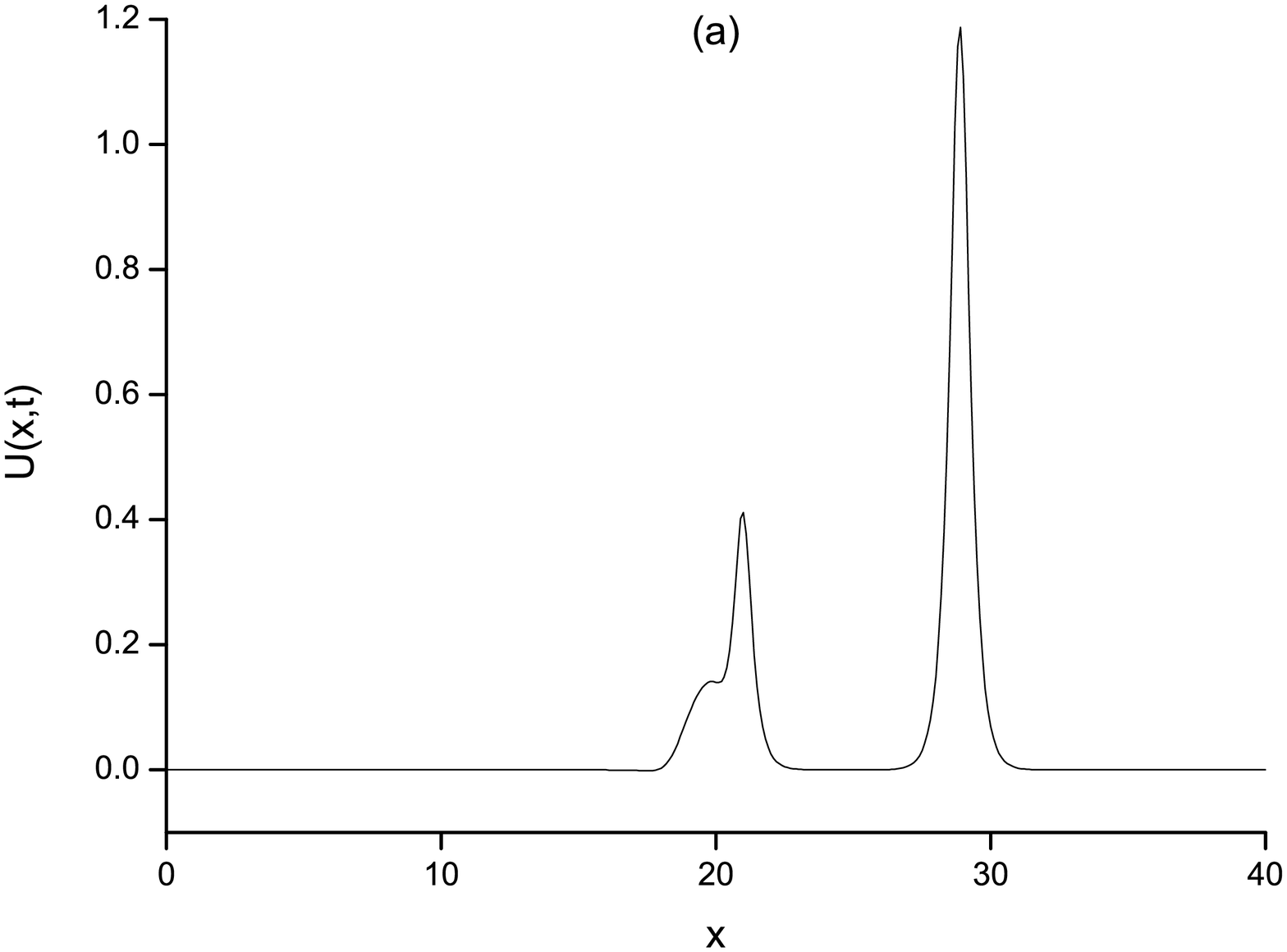} %
\includegraphics[scale=.20]{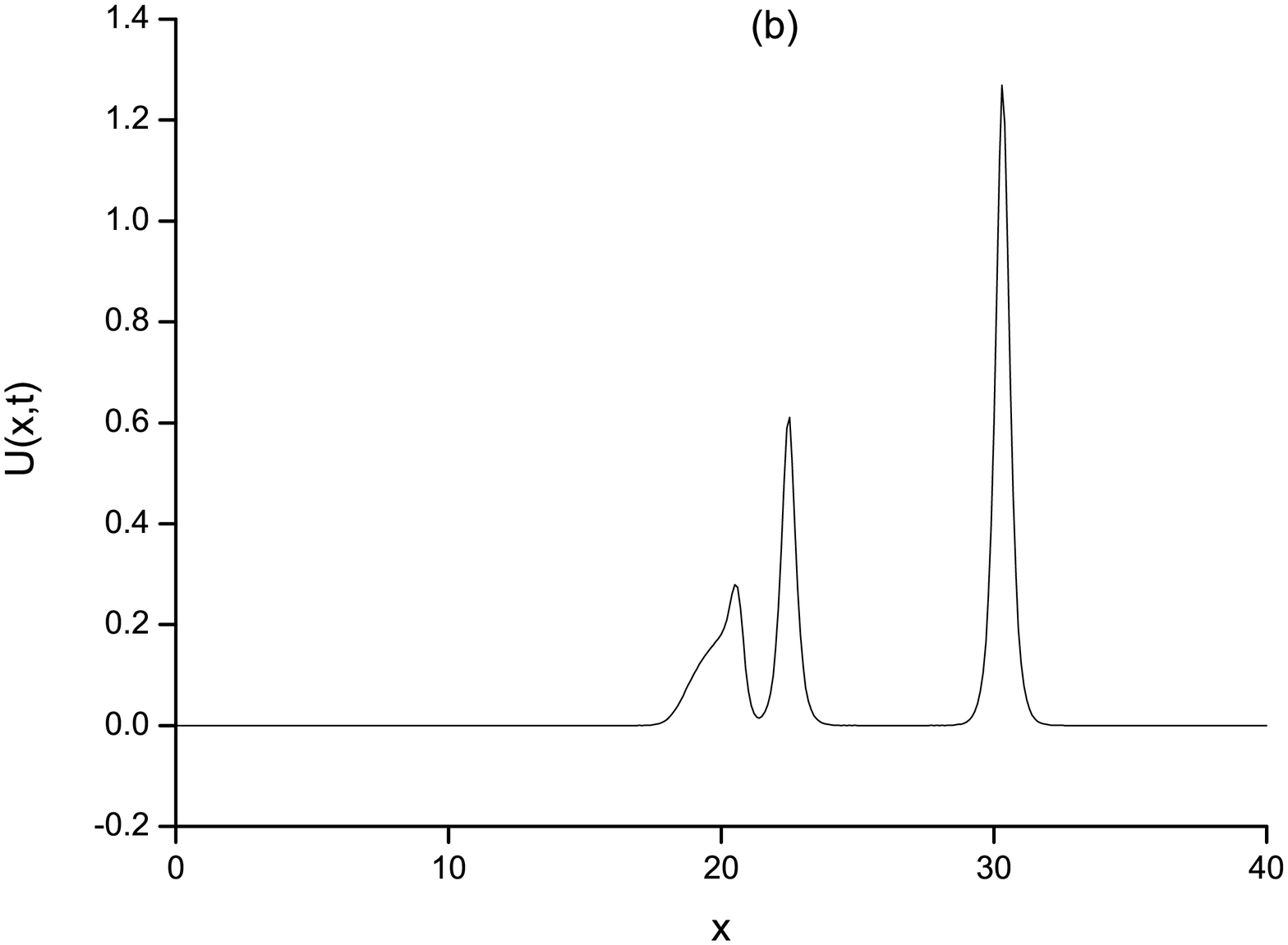}}
\caption{Maxwellian initial condition $p=2,$ $a)$ $\protect\mu =0.1,$ $b)$ $%
\protect\mu =0.05$ at $t=12.$ }
\label{4055}
\end{figure}
\begin{figure}[h!]
\centering{\small \includegraphics[scale=.20]{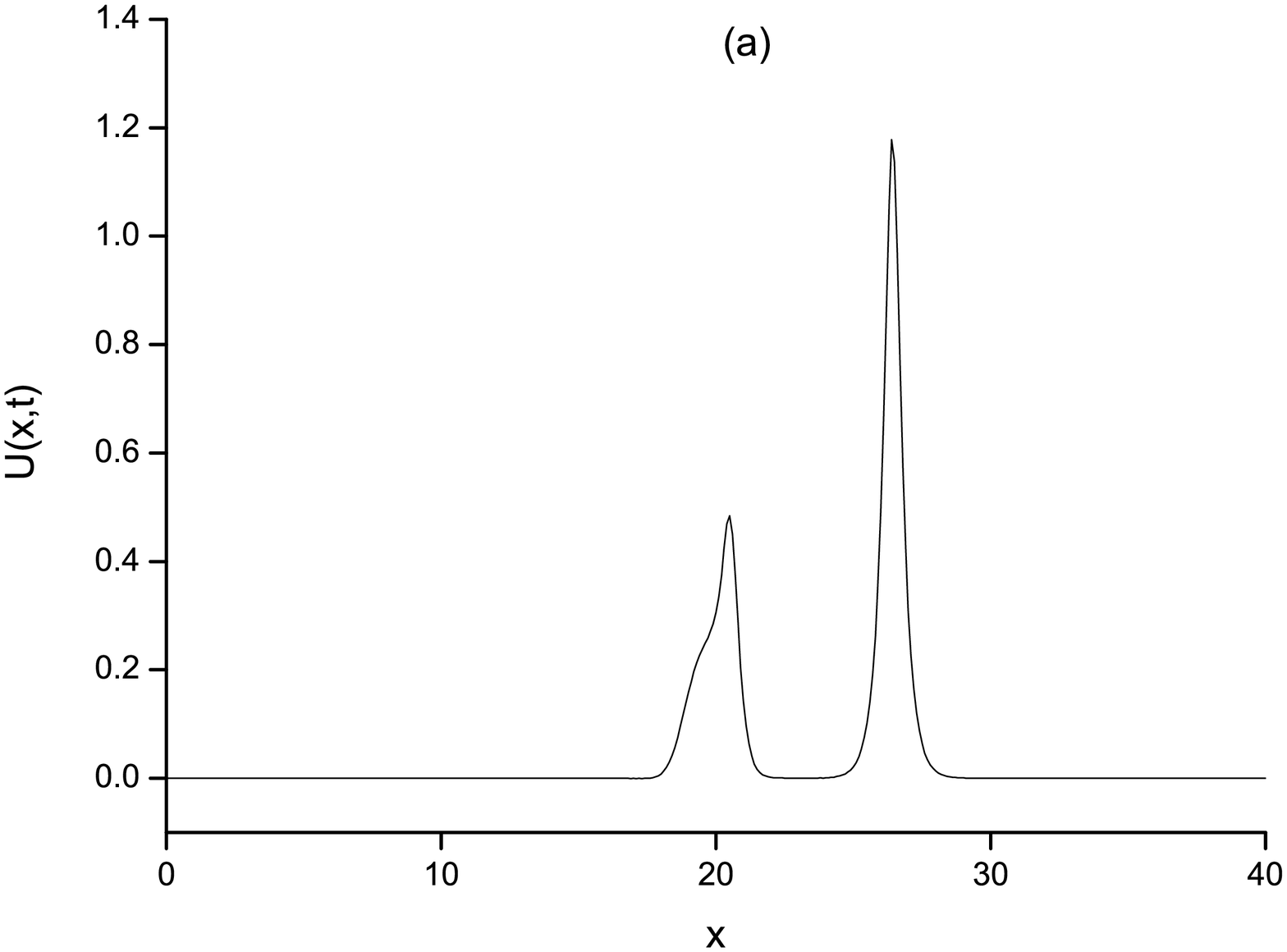} %
\includegraphics[scale=.20]{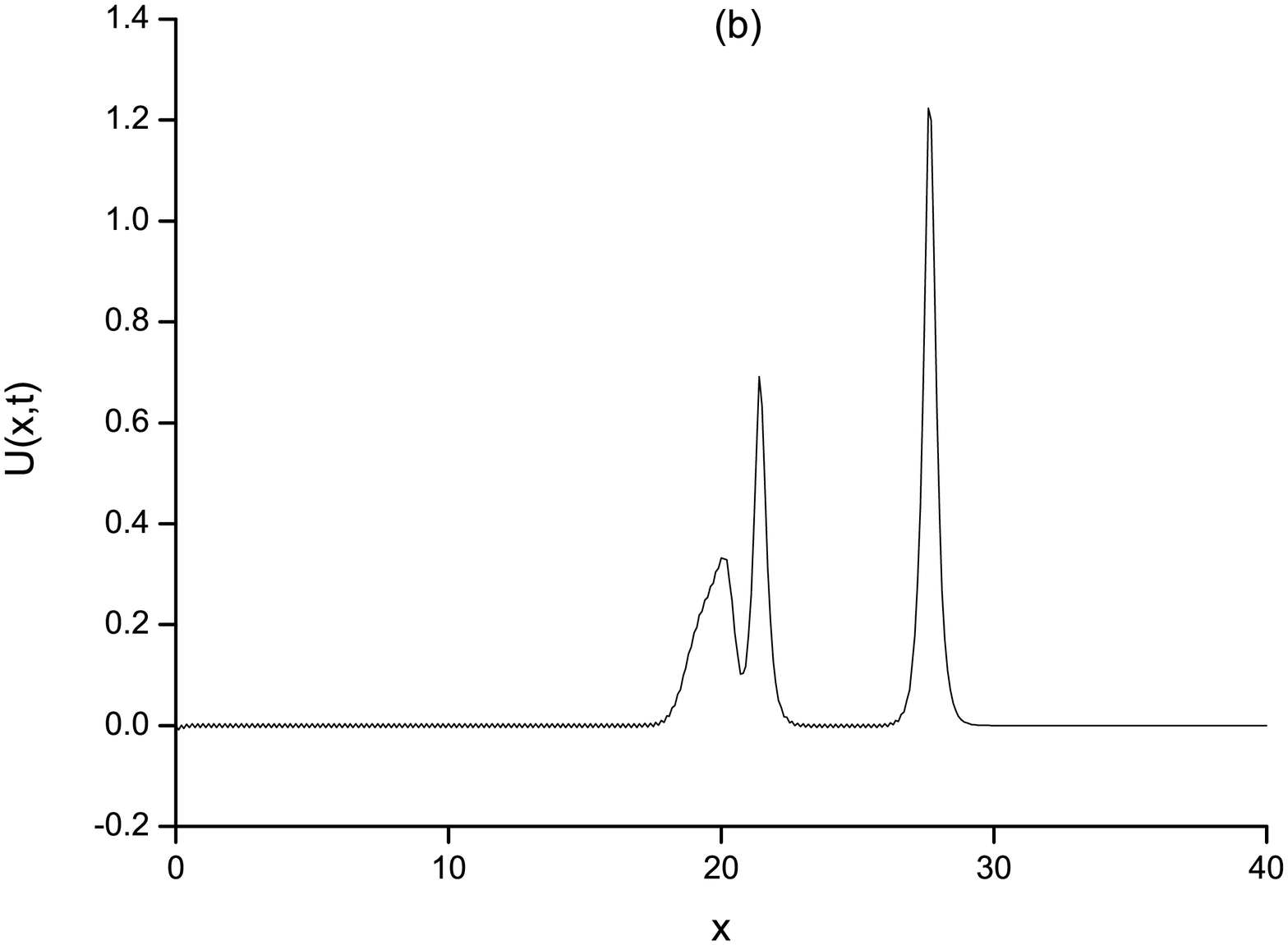}}
\caption{Maxwellian initial condition $p=3,$ $a)$ $\protect\mu =0.1,$ $b)$ $%
\protect\mu =0.05$ at $t=12.$ }
\label{4056}
\end{figure}
\begin{figure}[h!]
\centering{\small \includegraphics[scale=.20]{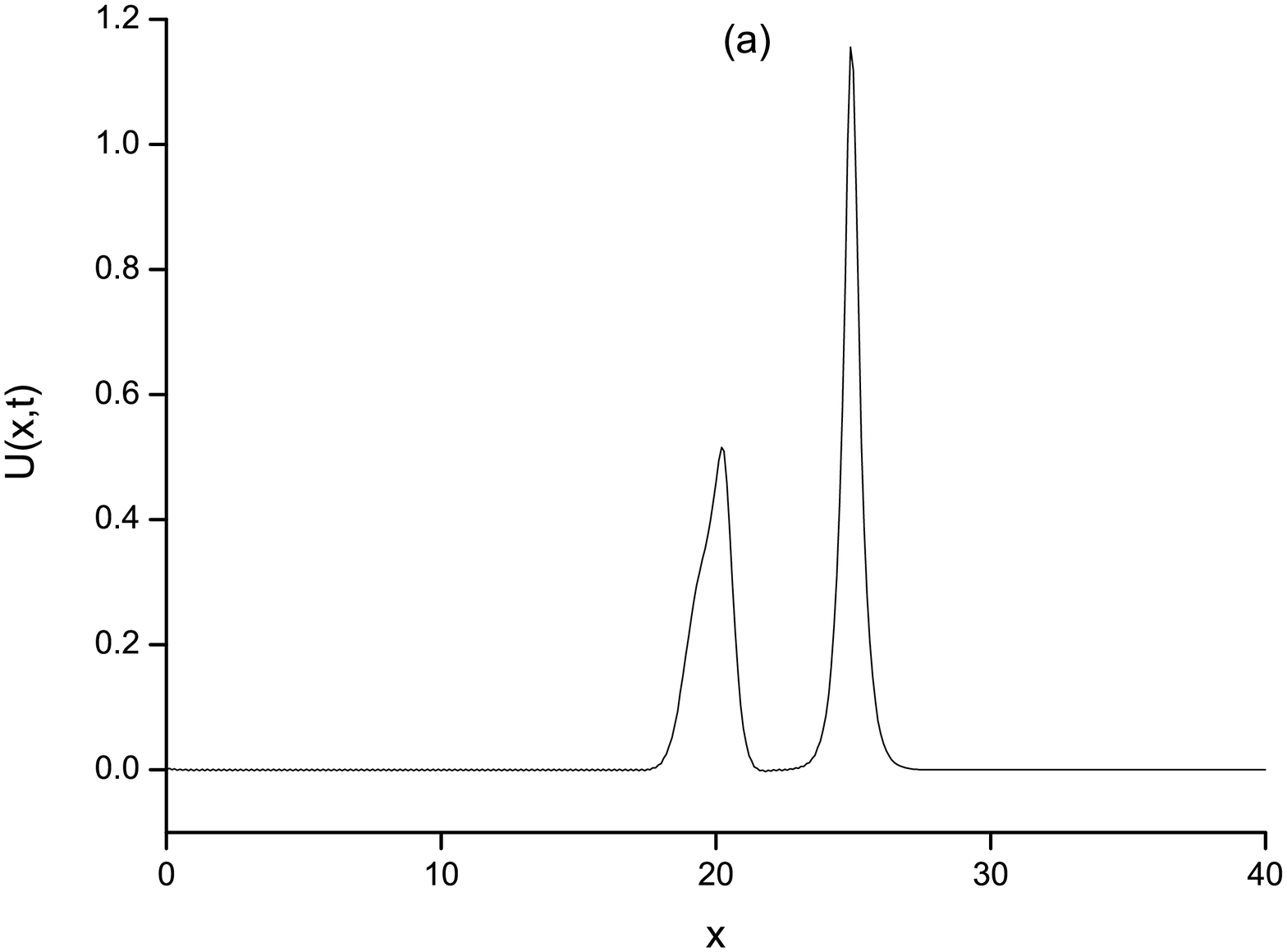} %
\includegraphics[scale=.20]{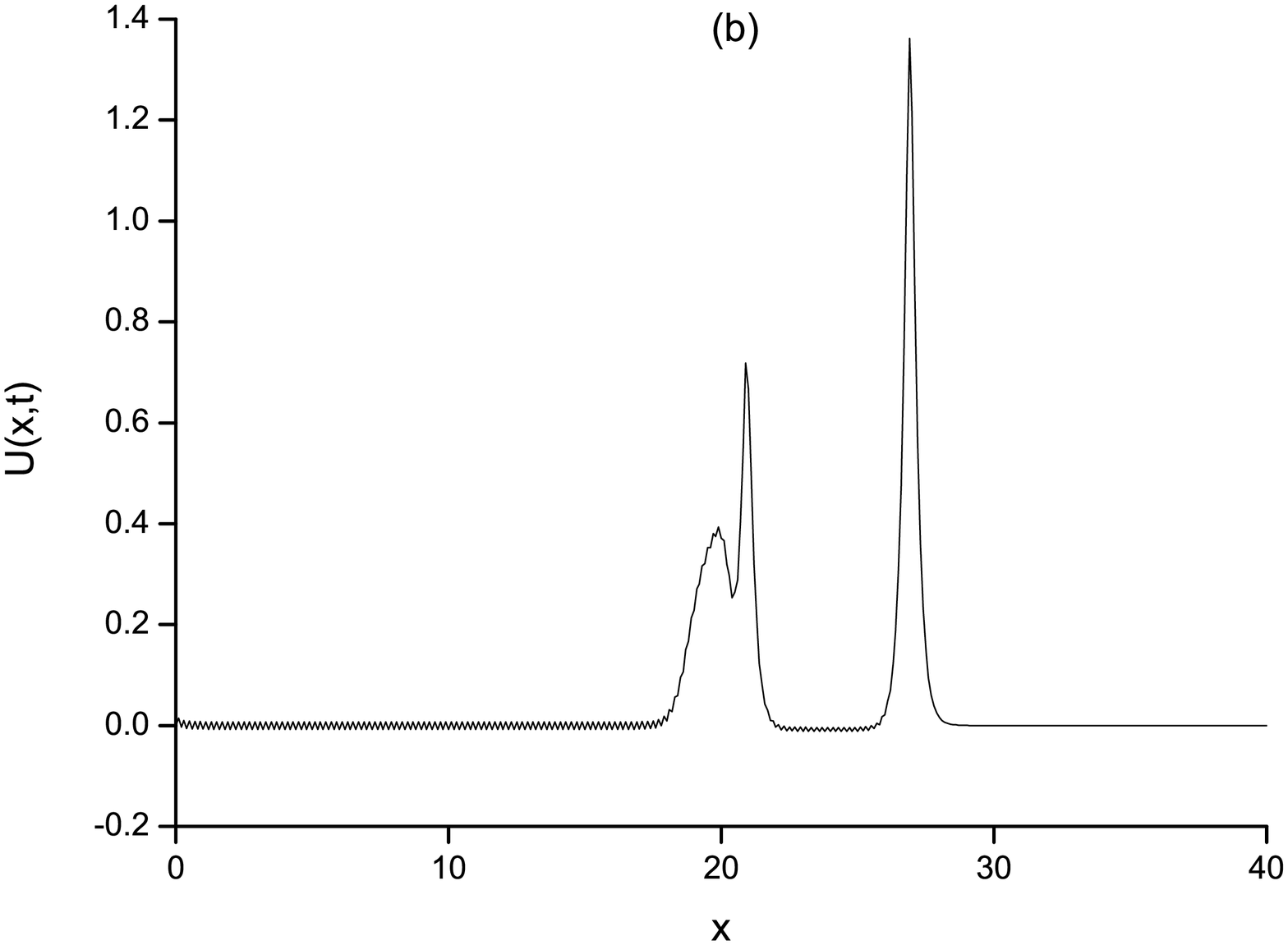}}
\caption{Maxwellian initial condition $p=4,$ $a)$ $\protect\mu =0.1,$ $b)$ $%
\protect\mu =0.05$ at $t=12.$}
\label{4057}
\end{figure}

\section{Concluding remarks}

. Solitary-wave solutions of the GEW equation by using Petrov-Galerkin
method based on linear B-spline weight functions and quadratic B-spline
trial functions, have been successfully obtained. \newline
. Existence and uniqueness of solutions of the weak of the given problem as
well as the proof of convergence has been proposed. \newline
. Solutions of a semi-discrete finite element formulation of the equation
and the theoretical bound of the error in the semi-discrete scheme are
demonstrated. \newline
. The theoretical upper bound of the error in such a full discrete
approximation at $t=t^{n}$ has been proved. \newline
. Our numerical algorithm has been tested by implementing three test
problems involving a single solitary wave in which analytic solution is
known and expanded it to investigate the interaction of two solitary waves
and evolution of solitons where the analytic solutions are generally unknown
during the interaction. \newline
. The proffered method has been shown to be unconditionally stable. \newline
. For single soliton the $L_{2}$ and $L_{\infty }$ error norms and\ for the
three test problems the invariant quantities $I_{1}$, $I_{2}$ and $I_{3}$
have been computed. From the obtained results it is obviously clear that the
error norms are sufficiently small and the invariants are marginally
constant in all computer run. We can also see that our algorithm for the GEW
equation is more accurate than the other earlier algorithms in the
literature. \newline
. Our method is an effective and a productive method to study behaviors of
the dispersive shallow water waves.

\end{document}